\documentclass[a4paper,12pt,reqno]{amsart}
\usepackage{tikz}
\usetikzlibrary{arrows,shapes}
\usepackage{amsmath}%
\usepackage{amsfonts}%
\usepackage{amssymb}%
\usepackage{graphicx}
\usepackage{float} 
\usepackage{subfigure} 
\usepackage{lineno,hyperref}
\hypersetup{
	colorlinks = true,
	linkcolor = blue,
	anchorcolor = blue,
	citecolor = blue,
	filecolor = blue,
	urlcolor = blue
}
%

\newcommand{\C}{\mathbb{C}}
\newcommand{\N}{\mathbb{N}}

\newtheorem{theorem}{Theorem}
\newtheorem*{thmA}{Theorem~A}
\newtheorem*{thmB}{Theorem~B}

\theoremstyle{plain}

\newtheorem{definition}{Definition}

\newtheorem{lemma}{Lemma}

\newtheorem{remark}{Remark}

\numberwithin{equation}{section}
\begin{document}
	\title[Oscillating]{The limit set of iterations of entire functions on wandering domains}
	\author[Huang]{Jiaxing Huang}
	\address{School of Mathematics Sciences, Shenzhen University, Guangdong, 518060, P. R. China}
	\email{hjxmath@szu.edu.cn}
	\author[Zheng]{Jian-Hua Zheng}
	\subjclass[2020]{37F10 (primary), 30D05 (secondary)}
	\keywords{Connectivity, Fatou Components, Meromorphic Functions, Periodic Domains, Wandering Domains}%
	\thanks{The first author was supported by the grant (No. 12201420 and 12231013) of NSF of China, and the second author was supported by the grant (No. 11571193) of NSF of China.}
	\address{Department of Mathematical Sciences, Tsinghua University, Beijing, 100084, P. R. China}
	\email{zheng-jh@mail.tsinghua.edu.cn}
	\begin{abstract}
  We first establish any continuum without interiors can be a limit set of iterations of an entire function on an oscillating wandering domain, and hence arise as a component of Julia sets. 
 Recently, Luka Boc Thaler showed that every bounded connected regular open set, whose closure has a connected complement, is an oscillating or an escaping wandering domain of some entire function. A natural question is: What kind of domains can be realized as a periodic domain of some entire function? In this paper, we construct a sequence of entire functions whose invariant Fatou components can be approached to a regular domain.
	\end{abstract}
	\maketitle
	
	\section{Introduction and Main Results}
	Let $f$ be a transcendental entire function from $\mathbb{C}$ to $\C$, 
 and denote by $f^n$, $n\in\mathbb{N}$ the $n$-th iterate of $f$. The Fatou set $\mathcal{F}(f)$ of $f$ is defined to be the set of points $z\in\hat{\mathbb{C}}$ such that $\{f^n\}$
 forms a normal family in an open neighborhood of $z$.  The complement $\mathcal{J}(f)$ of $\mathcal{F}(f)$ is called the Julia set of $f$. An introduction to the properties of these sets can be found in \cite{Ber91, Ber93}.
 A Fatou component $U$ is said to be $p$-periodic if $p$ is the minimal integer $p> 0$ such that $f^p(U)\subseteq U$, and $U_0, \dots, U_{p-1}$ is called a $p$-cycle;
 if $p=1$, then $U$ is called invariant. In addition, $U$ is called pre-periodic if $f^k(U)$ is contained in a periodic Fatou component, for some $k>0$;
 otherwise, we call $U$ a wandering component of $\mathcal{F}(f)$, or a wandering domain. 
 It is known that there are five possible types of periodic Fatou components \cite{Ber93}, namely, attracting domain, parabolic domain, Siegel disk, Herman ring, and Baker domain.

 In \cite{EL87}, the authors gave the first example of a transcendental entire function $f$ with oscillating wandering domains $D$; it is also the first time that the approximation method was introduced to complex dynamics. This method is developed more recently in \cite{Thaler, MRW22, PS23}. 
 In Eremenko and Lyubich's example \cite[Example 1]{EL87}, all limits functions of the family $(f^n)$ in $D$ are distinct constants, and the set of these constants is discrete and infinite. Hence, it is natural to ask if such a limit set is connected. On the other hand, recently, Mart\'i-Peter et al \cite[Theorem 1.2]{PRW22} showed that a planar continuum without interiors is a Julia component of a transcendental meromorphic function.
 
 In this paper, we will show that any continuum without interiors can be a limit set of iterations of an entire function on an oscillating wandering domain, and hence it is also a Julia component of the entire function.

 \begin{theorem}\label{thm:EO1}
    Let $J$ be a continuum with an empty interior, then there exists a transcendental entire function $f$ which has an oscillating wandering domain $\Omega$ such that $J\cup\{\infty\}$ is exactly a limit set of $f^n$ on $\Omega$.
\end{theorem}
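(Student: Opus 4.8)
The plan is to produce $f$ as a locally uniform limit of polynomials, via an inductive Runge approximation in the tradition of the example of Eremenko and Lyubich \cite{EL87} and its refinements \cite{Thaler, MRW22}. Since a continuum is a separable metric space, fix a sequence $(a_j)_{j\ge 1}$ in $J$ that is dense in $J$; as $J$ has empty interior, $\C\setminus J$ is open and dense, so we may also choose points $\gamma_j\notin J$ with $|\gamma_j-a_j|$ as small as we please. The goal is to build simultaneously a closed round disk $\Delta_0$ (the closure of the future domain $\Omega$), a sequence of pairwise disjoint closed disks $\Delta_n=\overline{D(\zeta_n,s_n)}$ with $s_n\downarrow 0$ and $\Delta_n\cap J=\varnothing$ for all $n$, and a transcendental entire $f$ with $f(\Delta_n)\subset\operatorname{int}\Delta_{n+1}$ and $|f-\zeta_{n+1}|<s_{n+1}/2$ on $\Delta_n$ for every $n\ge 0$, where the centres are prescribed to alternate between \emph{returns} $\zeta_{2k}=\gamma_{k+1}$ (lying near $J$) and \emph{excursions} $\zeta_{2k+1}$ with $|\zeta_{2k+1}|\to\infty$. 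The excursions are what will force $\Omega$ to oscillate.

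Granting such an $f$, the theorem follows easily. For $n\ge 1$ we have $f^n(\operatorname{int}\Delta_0)\subset\operatorname{int}\Delta_n$, and $\Delta_n\cap\Delta_0=\varnothing$, so each $f^n$ ($n\ge1$) omits on $\Omega:=\operatorname{int}\Delta_0$ two prescribed values (any two points of $\Delta_0$); by Montel's theorem $\{f^n\}$ is normal on $\Omega$, hence $\Omega\subset\mathcal F(f)$. Since $f$ is almost constant on each $\Delta_n$, the chain rule gives $\sup_\Omega|(f^n)'|\to 0$, so every locally uniform subsequential limit of $(f^n|_\Omega)$ is a constant and $\sup_\Omega|f^n-\zeta_n|\to 0$. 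Therefore the set of these limiting constants in $\hat{\mathbb C}$ is exactly the set of accumulation points of $(\zeta_n)$, which—because $(\gamma_j)$ is dense in $J$ with $|\gamma_j-a_j|\to 0$, while $|\zeta_{2k+1}|\to\infty$—equals $\overline{\{\gamma_j:j\ge1\}}\cup\{\infty\}=J\cup\{\infty\}$; this is exactly the assertion that $J\cup\{\infty\}$ is a limit set of $f^n$ on $\Omega$. Finally $\Omega$ is not (pre)periodic: its limit set is infinite and $f^n|_\Omega$ simultaneously degenerates to points and escapes, which is incompatible with any of the five types of cycles, so $\Omega$ is a wandering domain, and it is oscillating since a point of $\Omega$ has an unbounded orbit that also accumulates at finite points of $J$. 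The inclusion $J\cup\{\infty\}\subset\mathcal J(f)$ is then automatic.

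For the construction of $f$, write $f=\lim_\ell f_\ell$ with $f_1$ a polynomial and $f_{\ell+1}=f_\ell+h_\ell$, where $h_\ell$ is a polynomial furnished by Runge's theorem and chosen exponentially small on a compact set $K_\ell$ with $\hat{\mathbb C}\setminus K_\ell$ connected and $\bigcup_\ell K_\ell=\C$; this makes $f$ entire and ensures it inherits all the prescribed approximate properties with errors bounded by the (summable) tails. At step $\ell$ one pins down, consistently with the earlier steps, the behaviour of $f$ on one further disk $\Delta_\ell$ (``$f\approx\zeta_{\ell+1}$ there''). The disjointness of the $\Delta_n$ and the connectedness of the relevant complements are arranged by taking all $s_n$ tiny; the $s_n$ are in addition taken small enough that no accumulation point beyond $J\cup\{\infty\}$ is created along the excursions, so that the limit set is \emph{exactly} $J\cup\{\infty\}$.

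The step I expect to be the main obstacle is the tension intrinsic to oscillating constructions: the correction $h_\ell$ is controlled only on $K_\ell$, which normally forces each newly determined disk to be placed far out near $\infty$, whereas here infinitely many of the disks—the returns $\Delta_{2k}$—must cluster near the bounded set $J$. Handling this requires keeping a shrinking neighbourhood of $J$, together with a thin channel joining it to $\infty$, outside every $K_\ell$, so that the return disks always lie in the still-undetermined region while the channel keeps all the complements connected for Runge's theorem. It is precisely at this point that the hypotheses on $J$ (empty interior, and connectedness of its complement in $\hat{\mathbb C}$, which the routing of the excursions needs) are used, and where the quantitative refinements of the Eremenko–Lyubich technique in \cite{Thaler, MRW22} come in; once this interleaving is in place, the remaining verifications are routine.
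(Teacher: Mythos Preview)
There is a genuine obstruction, and it is more basic than the Runge--connectivity issue you flag in your final paragraph. You require $f(\Delta_{2k})\subset\operatorname{int}\Delta_{2k+1}$ for all $k$, with the return disks $\Delta_{2k}$ accumulating on the compactum $J$ while the excursion centres satisfy $|\zeta_{2k+1}|\to\infty$ and $s_{2k+1}\to0$. But then all the $\Delta_{2k}$ lie in a fixed compact neighbourhood $K$ of $J$, on which any entire $f$ is bounded, whereas your prescription forces $|f|\ge|\zeta_{2k+1}|-s_{2k+1}\to\infty$ on $\Delta_{2k}\subset K$. Hence no entire function can realise this itinerary, however the inductive Runge steps or the channels are arranged: by continuity, the immediate $f$-images of disks accumulating on $J$ can only accumulate on the compact set $f(J)$, never at $\infty$. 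A strict ``one return, then a single-step jump toward $\infty$'' pattern is impossible in principle, so the obstacle you anticipate (routing channels through a shrinking neighbourhood of $J$) is moot---the scheme fails before one ever reaches the approximation stage.

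The paper is organised differently in two ways. First, it does not add one disk per step: it prescribes the images of \emph{all} return disks $B_{m,n}$ simultaneously at the very first approximation (Theorem~A applied to the whole family $\{\overline{B_{m,n}}\}$), and every later correction lives only outside a growing exhaustion $C_k$; this replaces your channel manoeuvre entirely. Second, its itinerary is not a one-for-one alternation: level $m$ interleaves $2^m$ returns with excursion disks $\Delta_{m,n}$ placed at $4m+2ni$, and escape comes from the levels marching off. Note, however, that this still asks the return disks $B_{m,n}\subset\Delta(0,\tfrac13)$ to map in a single step to targets with $|b_{m,n}|\to\infty$, so the boundedness objection above applies verbatim to the paper's first step as well; Theorem~A cannot hold as stated for compacts $\overline{B_{m,n}}$ accumulating on $J$ with target values tending to $\infty$. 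A sound version of either argument must arrange that the images $f(B)$ of \emph{all} return disks stay uniformly bounded, with escape to $\infty$ achieved over an increasing number of excursion iterates rather than by one large jump from near $J$.
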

\begin{remark}
    Indeed, the continuum $J$ above can be replaced by a simple curve $\gamma:[0,\infty)\to\C$. This is because, for each such a simple curve $\gamma$, one can use a sequence $(X_j)$ of closed half-strips such that $X_j\subset\text{Int}(X_{j-1})$ and $\cap_{j=0}^{\infty}X_j=\gamma$. Replacing Runge's approximation (see Theorem A) with Arakelyan's Theorem (see Theorem B), and mimicking the argument of Theorem \ref{thm:EO1}, one can obtain that $\gamma$ is exactly a limit set of $f^n$ on $\Omega$.
\end{remark}
In the next part, we are interested in the periodic Fatou component of entire functions. In \cite{Thaler}, the author studied the geometry of simply connected wandering domains for entire functions and showed that every bounded connected regular open set, whose closure has a connected complement, is an escaping/oscillating wandering domain of some entire function. 
A natural question is whether such a regular domain can be realized as a periodic Fatou component of entire functions. It is known that the unit disk is an attracting domain of $z^2$, however, in general, it seems impossible. For example, the following drop shape domain (see Figure \ref{Fig1})  is bounded, regular, and connected, and its complement is connected. If such a domain is invariant, then its boundary is also invariant, and the preimage of the angle has infinitely many components in the boundary, this is impossible.
\begin{figure}[H]
	\centering
		\includegraphics[width=0.2\textwidth]{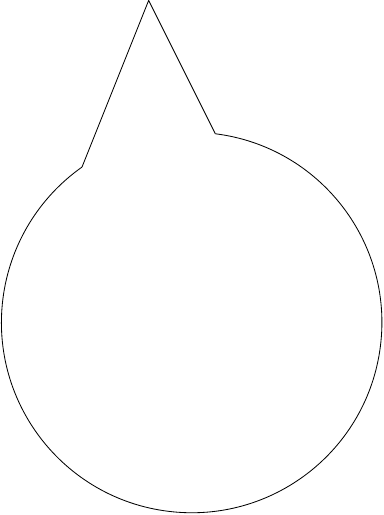}
  \vspace{1em}
		\caption{\small the drop domain  }
		\label{Fig1} 
	\end{figure}

 Thus, we pose the following question:\\
 
 \textbf{Question:} \emph{What kind of domains could be realized as a periodic Fatou component?}\\ 
 
 We cannot completely answer this question, but we get that there exists an entire function with an invariant domain sufficiently closed to a bounded connected regular domain.
  \begin{theorem}\label{thm:F}
    Let $D$ be a bounded simply connected regular open set. Then for any $\delta>0$, there exists an entire function $f$ for which $D^{-\delta}$ is contained in a periodic Fatou component $U$ with $D^{-\delta}\subset U\subset D$, where $D^{-\delta}$ is a subset of $D$ defined by
$$D^{-\delta}:=\{z\in D: \text{dist}(z, \C\setminus D)>\delta\}.$$
     \end{theorem}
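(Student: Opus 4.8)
The plan is to realize $U$ as an attracting basin of a suitable entire function $f$, built by the approximation (Runge/Arakelyan) machinery used throughout this area. Fix a point $z_0\in D^{-\delta}$; after an affine change of coordinates we may assume $z_0=0$. The target is to produce $f$ with $f(0)=0$, $|f'(0)|<1$, so that $0$ lies in an invariant attracting Fatou component $U$, and to force $D^{-\delta}\subseteq U\subseteq D$. The upper inclusion $U\subseteq D$ is the delicate one and must be arranged by trapping the orbit of $\partial D$ (or of a curve just inside $\partial D$) in the complement of $U$; the lower inclusion $D^{-\delta}\subseteq U$ will follow once we know that on a compact neighborhood of $\overline{D^{-\delta}}$ the map $f$ moves every point strictly closer (in a suitable conformal metric) to $0$.

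First I would choose a conformal map $\varphi:D\to\mathbb D$ with $\varphi(0)=0$ (Riemann mapping; $D$ is simply connected) and use it to pull back the contraction $w\mapsto \lambda w$, $|\lambda|<1$, obtaining a holomorphic self-map $g=\varphi^{-1}(\lambda\,\varphi(\cdot)):D\to D$ for which $\overline{D^{-\delta'}}$ is forward invariant and attracted to $0$, for a slightly larger $\delta'<\delta$. Since $D$ is a regular open set, $\overline{D^{-\delta'}}$ is a compact subset of $D$ with connected complement in $\C$ (for $\delta'$ small this can be ensured, using regularity of $D$); hence by Runge's theorem (Theorem~A) there is an entire $f$ with $\|f-g\|_{\overline{D^{-\delta'}}}$ as small as we like. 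Choosing the error small enough, $f(0)=0$ can be enforced (e.g. approximate $g$ and then subtract a small constant, or interpolate), $|f'(0)|<1$ is stable, and $f(\overline{D^{-\delta}})\subseteq D^{-\delta'}\subseteq \overline{D^{-\delta'}}$; iterating, the orbit of every point of $D^{-\delta}$ stays in $\overline{D^{-\delta'}}$ and converges to $0$. Therefore $D^{-\delta}$ lies in a single invariant Fatou component $U$, namely the attracting basin of $0$, and automatically $U$ is periodic.

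The main obstacle is the reverse inclusion $U\subseteq D$: a priori the basin of $0$ could be much larger than $D$, since we have only controlled $f$ on $\overline{D^{-\delta'}}$ and Runge approximation tells us nothing about $f$ outside. To control this I would, before applying the approximation theorem, enlarge the approximation set: besides $\overline{D^{-\delta'}}$, include a second compact piece $K$ sitting in the "collar" $D\setminus \overline{D^{-\delta'}}$ together with a large ring or arc outside $\overline D$, chosen so that $K\cup(\text{outer piece})\cup\overline{D^{-\delta'}}$ still has connected complement, and prescribe $f$ on the outer piece to take large values (so those points escape, landing in a different — escaping or at least non-$U$ — behaviour) while on $K$ prescribing $f$ to map $K$ into the outer escaping region. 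Then any point that leaves $D^{-\delta}$ under forward iteration is caught in $K$ and thrown to infinity, so it cannot belong to $U$; hence $U\cap K=\emptyset$ and, $U$ being connected and meeting $D^{-\delta}$, we get $U\subseteq D^{-\delta}\cup(\text{region enclosed})\subseteq D$. Making all these compact sets simultaneously Runge (connected complement of their union) is the combinatorial heart of the construction and is exactly the kind of arrangement carried out in \cite{EL87, Thaler}; one typically routes the "outer" set out to $\infty$ through a thin channel avoiding $\overline D$. A secondary technical point is ensuring $f$ is genuinely transcendental (add a small transcendental correction term, e.g. $\varepsilon\,(e^{z}-1)$ with $\varepsilon$ tiny, which preserves all the estimates since they are open conditions) and ensuring that after the correction $0$ is still a fixed point — handled by a final interpolation adjustment. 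I expect the argument for the reverse inclusion, i.e. designing the escaping trap in the collar with the connected-complement constraint respected, to be the part requiring the most care.
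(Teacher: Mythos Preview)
Your plan for the lower inclusion $D^{-\delta}\subset U$ is essentially what the paper does (the paper uses a linear contraction $h_\delta$ with $h_\delta(D_0)\Subset\Delta(0,c/2)\subset D^{-\delta}$ instead of your conformal conjugate of $w\mapsto\lambda w$, but either works).

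The genuine gap is in your scheme for the upper inclusion $U\subset D$. You want a compact ``barrier'' $K$ in the collar on which $f$ is prescribed to throw points to an escaping region, and then to conclude that $U$, being connected, cannot cross $K$. But for your connectivity argument to give $U\subset D$ you need $K$ to \emph{separate} $D^{-\delta}$ from $\C\setminus D$. Any compact set that separates the plane has disconnected complement, so it is excluded by the hypothesis of Theorem~A (and more basically, Runge approximation simply fails on a union $\overline{D^{-\delta'}}\cup K$ whose complement is disconnected: the bounded complementary region between $\overline{D^{-\delta'}}$ and the surrounding $K$ obstructs approximation by entire functions). If instead $K$ does not separate, then your claim ``any point that leaves $D^{-\delta}$ under forward iteration is caught in $K$'' has no justification: the basin $U$ can simply pass through the gap. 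Routing a thin channel to infinity, as you suggest, does not fix this --- the channel is exactly the hole through which $U$ may leak.

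The paper's device for forcing $U\subset D$ is quite different and is worth internalizing. Rather than placing a static barrier, it recycles the construction of Theorem~1: one builds, simultaneously with the contraction on $D^{-\delta}$, an oscillating wandering domain $\Omega$ (sitting far away from $D$) whose orbit visits a sequence of small disks $B_{m,n}$ placed in the successive collars $\operatorname{Int}(D_m)\setminus D_{m+1}$ and accumulating exactly on $\partial D$. Each $B_{m,n}$ is a tiny disk with connected complement, so there is no Runge obstruction; the point is that on a wandering domain every limit function of $(f^n)$ is a constant lying in $\mathcal J(f)\cup\{\infty\}$, so $\partial D=\omega_f(\Omega)\subset\mathcal J(f)$. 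Once $\partial D$ is contained in the Julia set, the Fatou component $U\supset D^{-\delta}$ cannot cross it, giving $U\subset D$. In short: the barrier is not a compact set on which $f$ is prescribed, but a Julia-set curve manufactured \emph{dynamically} as the limit set of an auxiliary wandering orbit.
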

     For any $\delta_n>0$ tending to $0$, we can construct a transcendental entire function $f_{\delta_n}$ with an invariant Fatou component $U_{\delta_n}$ such that $D^{-\delta_n}\subset U_{\delta_n}\subset D$.
    If such a sequence $(f_{\delta_n})_n$ of entire functions is convergent to a transcendental entire function $f$, then $D$ must be a periodic domain of $f$. 
    Thus, to answer the above question, it seems sufficient to tackle when such a sequence $(f_{\delta_n})$ converges locally uniformly to an entire function $f$.

In Theorem \ref{thm:F}, the periodic domain is bounded, which is not a Baker domain. Thus, we also consider the case of Baker domains. To state our result, we first introduce some definitions.
\begin{definition}
    Let $U$ be a planar domain in $\C$. 
    We say $U$ is nice if $\infty$ is accessible along a curve $\gamma$ in $U$. 
\end{definition}
\begin{lemma}\label{lem:nice}
Let $U$ be a nice simply connected domain in $\C$. Then
    there exists a holomorphic map $h:U\to \C$ 
    and an absorbing domain $W$ in $U$
    such that 
    \begin{itemize}
        \item $\overline{W}\subset U$
        \item $h(\overline{U})\subset U$, $h^n(\overline{W})=\overline{h^n(W)}\subset h^{n-1}(W)$ for all $n\geq 1$;
        \item $\cap_{n\geq 1}h^n(\overline{W})=\emptyset.$
        \item $\text{dist}(h^n(W), h^{n-1}(\partial W))>d>0$ for all $n\geq 1$ and some fixed constant $d$.
    \end{itemize}
\end{lemma}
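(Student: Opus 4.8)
The plan is to transplant the translation dynamics of a half-strip into $U$ by conformal maps, controlling every closure statement by means of the hyperbolic metric of $U$; the absorbing domain will be the conformal image of a sub-half-strip. If $U=\C$ one may simply take $h(z)=z+1$ and $W=\{\operatorname{Re}z>1\}$, so assume $U\subsetneq\C$. Fix a simple arc $\gamma\colon[0,\infty)\to U$ with $\gamma(t)\to\infty$, let $\rho_U$ be the hyperbolic metric of $U$, and for small $R>0$ put $T_R:=\{z\in U:\operatorname{dist}_{\rho_U}(z,\gamma)<R\}$. The key preliminary fact is that $\overline{T_R}\subset U$ (closure in $\C$), that $\partial T_R\subset U$, and that $T_R$ is a Jordan domain in $\hat{\mathbb C}$ whose unique prime end ``at the far end of $\gamma$'' has impression $\{\infty\}$. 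For $\overline{T_R}\subset U$: if $z_n\in T_R$, $z_n\to z^{*}$, and $w_n\in\gamma$ satisfies $\rho_U(z_n,w_n)<R$, then running the standard two-sided estimate $\tfrac1{2\operatorname{dist}(z,\partial U)}\le\rho_U(z)\le\tfrac2{\operatorname{dist}(z,\partial U)}$ along the $\rho_U$-geodesic from $z_n$ to $w_n$ gives, by a Gr\"onwall-type inequality, $|z_n-w_n|\le C(R)\operatorname{dist}(z_n,\partial U)$; hence $z^{*}\in\partial U$ would force $w_n\to z^{*}$, which is impossible because $\gamma([0,\infty))$ is a closed subset of $\C$ contained in $U$. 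The other assertions follow because the cross-sections of $T_R$ along $\gamma(t)$, $t\to\infty$, form a null-chain of crosscuts whose truncated sub-tubes leave every compact subset of $\C$.

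Next, let $S=\{\operatorname{Re}\zeta>0,\ |\operatorname{Im}\zeta|<1\}$, choose a conformal $\theta\colon S\to T_R$ (it extends to a homeomorphism of the $\hat{\mathbb C}$-closures, both being Jordan) sending the boundary point $\infty$ of $S$ to the prime end of $T_R$ at the far end of $\gamma$, a conformal $j\colon\{\operatorname{Re}w>0\}\to S$ with $j(\infty)=\infty$, and a Riemann map $\Phi\colon U\to\{\operatorname{Re}w>0\}$ sending the prime end of $U$ determined by $\gamma$ to $\infty$. For a suitably large $k\ (\ge 1)$ define
\[
h:=\theta\circ\sigma_{k}\circ j\circ\Phi,\qquad\sigma_{k}(\zeta)=\zeta+k,\qquad W:=\theta\bigl(\{\operatorname{Re}\zeta>1\}\cap S\bigr).
\]
Since $\theta$ is continuous on the finite part of $\overline S$ with values in $\overline{T_R}\subset U$, and since the prime end above has impression $\{\infty\}$, the same argument as in the tube lemma gives $\overline W\subset U$ and $\overline{h(U)}\subset\overline{T_R}\subset U$; the latter is the content of ``$h(\overline U)\subset U$'', read, as is needed downstream, as $\overline{h(U)}\subset U$ (the Riemann map $\Phi$, hence $h$, need not extend continuously across $\partial U$). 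As $j(\Phi(\overline W))\subset S$ and $\sigma_k(S)=\{\operatorname{Re}\zeta>k\}\cap S\subset\{\operatorname{Re}\zeta>1\}\cap S$ because $k\ge1$, one gets $h(\overline W)\subset W$; moreover $h$ is proper on the tube ($\zeta\to\infty$ in $\overline W$ forces $h\to\infty$), so $h(\overline W)=\overline{h(W)}$ and, inductively, $h^{n}(\overline W)=\overline{h^{n}(W)}\subset h^{n-1}(W)$. This settles the assertions $\overline W\subset U$, $h(\overline U)\subset U$, and $h^{n}(\overline W)=\overline{h^{n}(W)}\subset h^{n-1}(W)$.

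What remains — $\bigcap_n h^{n}(\overline W)=\emptyset$ and the uniform Euclidean gap — is where I expect the real difficulty. Conjugating the self-map $h|_{T_R}$ of $T_R$ by $\theta$ turns it into $H:=\sigma_{k}\circ\rho$ on $S$, where $\rho:=j\circ\Phi\circ\theta$ is a conformal self-embedding of $S$ fixing the boundary point $\infty$, so $H(\zeta)=\rho(\zeta)+k$. The plan is to take $k$ large enough that $H$ has no interior fixed point — equivalently, to control $\operatorname{Re}\rho(\zeta)$ against $\operatorname{Re}\zeta$ via the conformal distortion of the three maps composing $\rho$ (here $j$ is, near $\infty$, essentially a logarithm) — so that by Denjoy--Wolff, with the Julia--Wolff--Carath\'eodory lemma at $\infty$ identifying the limit, $H^{n}\to\infty$ locally uniformly on $S$; pulling back and using impression $\{\infty\}$ then gives $h^{n}(\overline W)\to\infty$ in $\C$ and hence $\bigcap_n h^{n}(\overline W)=\emptyset$. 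The uniform gap $\operatorname{dist}\bigl(h^{n}(W),h^{n-1}(\partial W)\bigr)\ge d>0$ is arranged by normalising $\gamma$ (equivalently $\Phi$) so that the floors $h^{n}(\partial W)$ are consecutive unit $\zeta$-translates and by exploiting that all $h^{n}(W)$ stay inside the fixed tube $T_R$, on which the passage between the $\zeta$- and $z$-coordinates is under control. The genuine obstacle throughout is the tension between needing $T_R$ (hence $W$) thin, so that $\overline W\subset U$, and needing the iterates of $W$ to escape to $\infty$ at a non-degenerating rate, so that the intersection is empty and the gap uniform; reconciling these by a compatible choice of $S$, $\theta$, $k$ and $R$ — in particular pinning down the fixed-point-free behaviour of $H$ — is the crux of the proof, and is exactly where the ``niceness'' of $U$ is used.
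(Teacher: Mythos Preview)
Your construction is far more elaborate than what the paper does, and it is also incomplete: you yourself flag the last two bullet points as ``the real difficulty'' and offer only a plan (choose $k$ large, invoke Denjoy--Wolff, hope the Euclidean gaps do not degenerate) rather than an argument.

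The paper's proof is essentially one line. Take a Riemann map $\varphi:\mathbb{H}\to U$ from the right half-plane $\mathbb{H}=\{\Re z>0\}$ onto $U$ sending $\infty$ to $\infty$ (this is exactly where niceness is used), set $T(z)=z+1$, and define
\[
h:=\varphi\circ T\circ\varphi^{-1},\qquad W:=\varphi(\mathbb{H}_1),\quad \mathbb{H}_1=\{\Re z>1\}.
\]
Then $h^n(W)=\varphi(\{\Re z>n+1\})$, so the nesting, the empty intersection, and the separation statements all reduce to the corresponding trivial facts about the translates $\{\Re z>n\}$ in the half-plane. There is no need for hyperbolic tubes, half-strips, the auxiliary maps $j$ and $\theta$, or Denjoy--Wolff theory. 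Your composite $\theta\circ\sigma_k\circ j\circ\Phi$ is, morally, just $\varphi\circ T\circ\varphi^{-1}$ once you realise that the model domain can be the half-plane itself and the return map can be $\varphi$.

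Your instinct that the closure statements and the uniform Euclidean gap deserve scrutiny is not unreasonable---the paper asserts these without further comment---but your remedy makes things worse rather than better. By routing through a sub-tube $T_R$ with a different return map $\theta$, you destroy the explicit conjugacy $h^n=\varphi\circ T^n\circ\varphi^{-1}$ that makes the iterates transparent, and you are then forced to appeal to Denjoy--Wolff and Julia--Wolff--Carath\'eodory merely to recover $h^n\to\infty$, which in the paper's setup is immediate from $T^n(\zeta)=\zeta+n\to\infty$ and $\varphi(\infty)=\infty$. The genuine obstacle you identify at the end is one you created by abandoning the direct conjugacy.
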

\begin{proof}
    Let $\mathbb{H}:=\{z:\Re z>0\}$ be a right half-plane and $\mathbb{H}_1:=\{z: \Re z>1\}$. As $U$ is a nice simply connected domain, it follows from the Riemann mapping theorem that there exists a conformal map $\varphi:\mathbb{H}\to U$ such that $\varphi(\infty)=\infty$. Take $h(z):=\varphi\circ T\circ \varphi^{-1}$ and $W:=\varphi(\mathbb{H}_1)$, where $T(z)=z+1$, then we can obtain the result.
\end{proof}
     \begin{theorem}\label{thm:Baker}
     Let $U$ be a nice and simply connected open set contained in a half-plane such that $\partial U=\partial(\C\setminus \overline{U})$. Then for any $\delta>0$, there exists an entire function $f$ for which $U^{-\delta}$ is contained in an invariant Baker domain $\Omega$ with $U^{-\delta}\subset\Omega\subset U$.
     \end{theorem}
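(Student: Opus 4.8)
The plan is to realise $U$, up to a collar of width $\delta$, as an invariant Baker domain by transplanting the model dynamics of Lemma~\ref{lem:nice} onto an entire function via the approximation method, while simultaneously forcing $\partial U$ into the Julia set so that the Baker domain cannot leak out of $U$; this parallels the construction behind Theorem~\ref{thm:F} and the wandering–domain constructions of \cite{EL87,Thaler,MRW22,PS23}. \emph{(Model map.)} Since $U$ is nice and simply connected, Lemma~\ref{lem:nice} supplies a holomorphic $h\colon U\to\C$ and an absorbing domain $W$ with $\overline W\subset U$, $h(\overline U)\subset U$, $h^{n}(\overline W)=\overline{h^{n}(W)}\subset h^{n-1}(W)$ strictly decreasing, $\bigcap_{n\ge 1}h^{n}(\overline W)=\emptyset$, and $\operatorname{dist}\!\big(h^{n}(W),h^{n-1}(\partial W)\big)>d>0$ for all $n$. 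From $h=\varphi\circ T\circ\varphi^{-1}$, $T(z)=z+1$, $\varphi(\infty)=\infty$, one gets $h^{n}\to\infty$ locally uniformly on $U$; together with $\bigcap h^{n}(\overline W)=\emptyset$ and the containment of $U$ in a half-plane, the orbit of any compact $K\subset U$ is eventually caught in, and then stays inside, the Euclidean-thick nested shells $h^{n-1}(W)\setminus h^{n}(\overline W)$, and tends to $\infty$. So on $U$ the map $h$ already behaves like an invariant Baker domain.

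\emph{(Approximation.)} Fix $\delta>0$. The hypothesis $\partial U=\partial(\C\setminus\overline U)$ says $U$ is regular, so $\C\setminus U=\overline{\C\setminus\overline U}$; and since $U$ lies in a half-plane, $\C\setminus\overline U$ contains a half-plane, giving room at $\infty$. Pick $p\in\C\setminus\overline U$ with $\overline{D(p,r)}\subset\C\setminus\overline U$. I would approximate, on the closed set $E:=\overline{U^{-\delta}}\cup(\C\setminus U)$, whose two pieces lie at distance $\ge\delta$, the function $g$ given by $g=h$ on $\overline{U^{-\delta}}$ and $g\equiv p$ on $\C\setminus U$, using Arakelyan's theorem (Theorem~B), to obtain an entire $f$ with $\sup_{E}|f-g|<\varepsilon$ for some small $\varepsilon\in(0,\min\{d,r/4\})$. (Admissibility requires $\hat{\C}\setminus E=(U\setminus\overline{U^{-\delta}})\cup\{\infty\}$ to be connected and locally connected at $\infty$; this is arranged using that $U$ is nice and sits in a half-plane, after shrinking $\delta$ if necessary, or by the standard device of replacing the single Arakelyan step by an exhausting sequence of Runge approximations on compacta with connected complements, controlling the errors so that the dynamical conclusions survive in the limit as in \cite{EL87,Thaler,MRW22,PS23}.) A Rouché argument on $D(p,r)$ then gives a fixed point $p^{*}$ of $f$ with $|p^{*}-p|<\varepsilon$ and $|f'(p^{*})|<1$, whose basin contains $D(p,r)$; since $f(\C\setminus U)\subset D(p,\varepsilon)\subset D(p,r)$, the whole set $\C\setminus U$, in particular $\partial U$, lies in the attracting basin of $p^{*}$, hence in $\mathcal F(f)$ with $f^{n}\to p^{*}\neq\infty$ there.

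\emph{(Conclusion.)} For the lower bound one shows, using $f\approx h$ on $\overline{U^{-\delta}}$, $h(U)\subset W$, and the thickness bound $\operatorname{dist}(h^{n}(W),h^{n-1}(\partial W))>d>\varepsilon$, that every orbit starting in $U^{-\delta}$ enters the nested shells and escapes to $\infty$; thus $U^{-\delta}\subset\mathcal F(f)$ with $f^{n}\to\infty$, and since $f(U^{-\delta})$ is close to the connected set $h(U^{-\delta})\subset W$, all of $U^{-\delta}$ lies in a single Fatou component $\Omega$, which is therefore a Baker domain. Forward invariance of $\Omega$ (so that it is a genuine invariant Baker domain, not an escaping wandering domain) follows because $W$, absorbing for the model, is mapped by $f$ into itself and hence into $\Omega$. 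For the upper bound, $\Omega$ is connected, meets $U^{-\delta}\subset U$, and is disjoint from $\partial U$ (which lies in the basin of $p^{*}$, a Fatou component $\neq\Omega$ since there $f^{n}\not\to\infty$); as $\C\setminus\partial U=U\sqcup(\C\setminus\overline U)$ is a separation into two open sets, a connected set meeting $U$ and avoiding $\partial U$ is contained in $U$, so $\Omega\subset U$, giving $U^{-\delta}\subset\Omega\subset U$.

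\emph{(Main obstacle.)} The delicate point is the dynamics in the $\delta$-collar $U\setminus\overline{U^{-\delta}}$, where $f$ is \emph{not} controlled by the approximation. Conformal distortion ($\varphi$ may compress the Euclidean distance from $\partial U$ along the model orbit, e.g.\ when $U$ has cusps or pinches) means one cannot keep the orbit of $U^{-\delta}$ uniformly inside a prescribed forward-invariant region, nor prescribe $g=h$ right up to $\partial U$ without clashing with $g\equiv p$ there; it is precisely to absorb this loss that one captures only $U^{-\delta}$, mirroring the role of $\delta$ in Theorem~\ref{thm:F}. The quantitative bookkeeping showing that the escape dynamics on $U^{-\delta}$ survive the approximation error, while $\hat{\C}\setminus E$ is kept admissible and $\C\setminus U$ is driven to $p^{*}$, is the technical heart of the argument.
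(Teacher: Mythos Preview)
Your mechanism for trapping $\Omega$ inside $U$---sending all of $\C\setminus U$ into the basin of an attracting fixed point $p^{*}$, so that $\partial U$ lies in a Fatou component on which $f^{n}\not\to\infty$---is different from the paper's, which instead runs the full oscillating--wandering--domain construction of Theorem~\ref{thm:EO1} with disks $B_{m,n}$ accumulating on $\partial U$, so that $\partial U$ becomes the limit set of a wandering orbit and hence sits in $\mathcal{J}(f)$. Your idea is more economical and would also yield $\Omega\cap\partial U=\emptyset$, provided the Arakelyan admissibility of $E=\overline{U^{-\delta}}\cup(\C\setminus U)$ can be arranged (note that the collar $U\setminus\overline{U^{-\delta}}$ need not be connected, so this is not automatic).

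The genuine gap, however, is the one you label ``Main obstacle'' but do not close. You apply Lemma~\ref{lem:nice} to $U$, so $h:U\to U$ and the absorbing domain $W$ satisfy only $\overline W\subset U$; there is no reason for $W$, or for the nested images $h^{n}(W)$, to lie in $U^{-\delta}$. For typical $U$ (a strip, say) one has $\operatorname{dist}(\overline W,\partial U)=0$, since $W=\varphi(\mathbb H_{1})$ hugs $\partial U$ near infinity. Consequently, after one step the $f$-orbit of a point in $U^{-\delta}$ can land in the collar $U\setminus\overline{U^{-\delta}}$, where $f$ is completely uncontrolled, and from there you can conclude nothing: neither that the orbit re-enters $U^{-\delta}$, nor that it tends to $\infty$, nor even that it stays in $U$. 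Your thickness bound $\operatorname{dist}(h^{n}(W),h^{n-1}(\partial W))>d$ is useless once $f$ is no longer close to $h$ on $h^{n-1}(W)$. The paper closes this by applying Lemma~\ref{lem:nice} to $U^{-\delta}$ rather than to $U$, so that $h(U^{-\delta})\subset U^{-\delta}$ and $\overline W\subset U^{-\delta}$ from the start; then $|f-h|<\varepsilon$ on a closed set containing $U^{-\delta}$ gives $f(U^{-\delta})\subset U^{-\delta}$ directly, and the nested-shell estimate on $W\subset U^{-\delta}$ survives to yield $f^{n}|_{W}\to\infty$.
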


 \section{Preliminaries}
\subsection{Notation}
An open set $U$ is said to be regular if $U=\text{Int}(\overline{U}).$ 
A closed set $A$ is said to Weierstrass if $\hat{\C}\setminus A$ is connected and locally connected at $\infty$, where $\hat{\C}=\C\cup\{\infty\}$.
We denote by $\Delta(a, r)$ an open disk centered at $a$ of radius $r$, by $\overline{U}$ the closure of the set $U$, by $\text{Int}(U)$ the interior of $U$, by $\partial U$ the boundary of $U$, and by $U'$ the derived set of $U$. By $V\Subset U$ we mean that $V$ is compactly contained in $U$. Finally, by $\omega_f(U)$ we mean the set of all points $w$ for which there is some point $z\in U$ whose orbit under $f$ accumulates on $w$.
    
\subsection{Approximation results} We require the following two stronger versions of the well-known Runge's approximation theorem and Arakelyan's approximation theorem.
 \begin{thmA}[\cite{Thaler}, Theorem 4]
    Let $(K_m)_{m\geq 0}$ be a sequence of compact sets pairwise disjoint whose complements $\C\setminus K_m$ are connected. Let $L_k\subset K_k$ be a finite set of points and $f_k:K_k\to\C$ a holomorphic map for every $k$. For every $\epsilon>0$, there exists an entire function $f$ satisfying:
    \begin{itemize}
        \item $|f_k-f|_{K_k}<\epsilon$,
        \item $f(x)=f_k(x)$ for all $x\in L_k$,
        \item $f'(x)=f_k'(x)$ for all $x\in L_k$
    \end{itemize} for every $k$.
\end{thmA}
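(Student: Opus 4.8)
The plan is to establish Theorem~A by an inductive Runge--Hermite construction: one produces $f$ as the locally uniform limit of entire functions $S_n=\sum_{j=0}^{n}g_j$, where each $g_j$ is a polynomial obtained from a single application of the classical Runge theorem, followed by a Hermite correction that fixes the finitely many interpolation data. The whole scheme rests on one topological fact: \emph{a finite union of pairwise disjoint compact sets, each with connected complement, again has connected complement.} Indeed, if $O$ were a bounded component of $\C\setminus(K_0\cup\dots\cup K_n)$, then $\hat{\C}\setminus O$ would be connected (a component of an open subset of the sphere has connected complement), so $\overline{O}$ and $\hat{\C}\setminus O$ would be subcontinua of $\hat{\C}$ with union $\hat{\C}$ and intersection $\partial O$; by unicoherence of the sphere, $\partial O$ would be connected. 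But $\partial O\subseteq K_0\cup\dots\cup K_n$, a union of pairwise disjoint compacts, so $\partial O$ would lie in a single $K_i$, forcing $O$ to be a bounded component of $\C\setminus K_i$ --- contradicting that $\C\setminus K_i$ is connected. Hence classical Runge (with poles pushed to $\infty$, i.e. polynomial approximation) applies on every finite union $\Sigma_n:=K_0\cup\dots\cup K_n$. We also note that the family must be locally finite for the conclusion to be possible at all, since an entire function is continuous; so we assume $\text{dist}(0,K_m)\to\infty$, relabel accordingly, and take each $f_k$ holomorphic on pairwise disjoint bounded neighbourhoods $V_k\supseteq K_k$.

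\textbf{Inductive step.} Assume $S_{j-1}$ has been built. One first selects a full compact ``window'' $E_{j-1}$ with $\Sigma_{j-1}\subseteq E_{j-1}$, $E_{j-1}\cap K_j=\emptyset$, the $E_m$ increasing in $m$, and the $E_m$ large enough to absorb every compact set (so that the series below converges on all of $\C$); this selection is the crux, discussed below. On disjoint neighbourhoods of $E_{j-1}$ and of $K_j$ define $\phi_j$ to be $0$ near $E_{j-1}$ and $f_j-S_{j-1}$ near $K_j$, so $\phi_j$ is holomorphic near the full compact $E_{j-1}\cup K_j$. Taking a polynomial $p$ with $|p-\phi_j|$ tiny on $E_{j-1}\cup K_j$ by Runge, and then adding the Hermite polynomial that interpolates the (thus $O(\epsilon 2^{-j})$-small) residual values and derivatives of $\phi_j-p$ at the finitely many points of $L_0\cup\dots\cup L_j\subseteq E_{j-1}\cup K_j$ --- whose sup-norm on $E_{j-1}\cup K_j$ is bounded by a constant depending only on those points and that set, times the residual data, hence again $O(\epsilon 2^{-j})$ --- yields an entire $g_j$ with $|g_j-\phi_j|<\epsilon 2^{-j-2}$ on $E_{j-1}\cup K_j$ and $g_j=\phi_j$, $g_j'=\phi_j'$ on $L_0\cup\dots\cup L_j$. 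Put $S_j:=S_{j-1}+g_j$. Since $\phi_j$ and $\phi_j'$ vanish on $L_0\cup\dots\cup L_{j-1}$, the interpolation data accumulated so far are undisturbed, while at $L_j$ one obtains $S_j=f_j$, $S_j'=f_j'$; moreover $|S_j-f_j|_{K_j}<\epsilon 2^{-j-2}$ and $|g_j|_{E_{j-1}}<\epsilon 2^{-j-2}$. As $E_{j-1}\uparrow\C$, the series $\sum_j g_j$ converges locally uniformly to an entire $f$; the interpolation conditions pass to the limit (each later $g_i$ vanishes to first order on $L_0\cup\dots\cup L_j$), and for each $k$, using $K_k\subseteq\Sigma_{j-1}\subseteq E_{j-1}$ for $j>k$, one gets $|f-f_k|_{K_k}\le|S_k-f_k|_{K_k}+\sum_{j>k}|g_j|_{K_k}<\sum_{j\ge k}\epsilon 2^{-j-2}<\epsilon$.

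\textbf{Main obstacle.} The delicate point is the construction of the windows $E_j$: they must exhaust $\C$ and contain $K_0,\dots,K_j$, yet miss $K_{j+1}$ --- and missing $K_{j+1}$ is unavoidable, because the next term $g_{j+1}$ has to be large on $K_{j+1}$ exactly where it must be tiny on $E_j$. Equivalently, one needs an increasing exhaustion of the open connected set $W_j:=\C\setminus\bigcup_{i>j}K_i$ by compacts that are polynomially convex \emph{in $\C$} and that, upon being ``filled in'', never swallow some $K_i$ with $i>j$. This is immediate when the $K_m$ are well separated radially, e.g. $K_m\subset\{\,c_m<|z|<c_{m+1}\,\}$ for some $c_m\uparrow\infty$ --- the situation in the application to Theorem~\ref{thm:EO1} --- since then $E_j:=\overline{\Delta(0,c_{j+1})}$ does the job. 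In general one fills a large closed disk $\overline{\Delta(0,t_j)}$ together with $\Sigma_j$ and $E_{j-1}$, choosing $t_j\uparrow\infty$ small enough and using local finiteness to control how far out the filling reaches, so that the filled set stays inside $W_j$; carrying out this bookkeeping, together with the routine but fiddly fattening-and-filling that makes legitimate the Cauchy estimates behind the Hermite correction, is where essentially all of the care is needed.
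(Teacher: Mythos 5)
The paper does not actually prove this statement --- it is imported verbatim from Thaler's work --- so there is no in-paper argument to compare against. Your proposal is essentially the standard proof of such ``simultaneous Runge with jets'' theorems and, as far as I can tell, the same telescoping scheme used in the source: approximate on one new compact at a time relative to a growing polynomially convex window on which the new term is forced to be small, add a Hermite interpolation polynomial to restore the finitely many prescribed values and derivatives, and sum the resulting locally uniformly convergent series. Your observation that the statement as transcribed is false without a local finiteness hypothesis (the $K_m$ must leave every compact set) is correct and worth making explicit; in the paper's applications the $K_m$ are confined to disjoint round regions marching off to infinity, which is exactly the situation where your window construction is immediate.

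Two points need repair. First, your justification of the key topological lemma breaks at one step: it is not true that ``a component of an open subset of the sphere has connected complement'' --- the annulus $\{1<|z|<2\}$ is an open connected subset of $\hat{\C}$ equal to its own component, and its complement has two pieces. So you cannot conclude that $\hat{\C}\setminus O$ is connected and then invoke unicoherence; within the proof by contradiction you know nothing yet about the complement of the hypothetical bounded component $O$. The lemma itself (a finite pairwise disjoint union of compacta with connected complements has connected complement) is true and standard; a clean route is induction on the number of pieces via Janiszewski's theorem: if two closed subsets of $S^2$ have connected (here empty) intersection and neither separates a given pair of points, neither does their union. Second, the construction of the windows $E_j$ in the general locally finite case is only sketched, and this is a genuine gap in that generality: the ``filling'' of $\overline{\Delta(0,t_j)}\cup\Sigma_j$ can in principle swallow some $K_i$ with $i>j$ (for instance if such a $K_i$ sits in a pocket bounded by the circle $|z|=t_j$ and an arc-like $K_l$ with $l\leq j$), and ruling this out requires an actual argument rather than bookkeeping. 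Since the paper only ever applies the theorem with compacts separated by round annuli, where $E_j$ can be taken to be a closed disk, this does not affect anything downstream; but as a proof of Theorem A as stated it is incomplete at exactly that point.
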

Similarly, one can also get the following Arakelyan's approximation theorem.
\begin{thmB}[Arakelyan's theorem]
    Let $A\subset\C$ be a closed Weierstrass set, and $L\subset A$ be a finite set of points. Suppose that $g:A\to\C$ is a continuous function that is holomorphic on Int$(A)$. Then for every $\epsilon>0$, there exists an entire function $f$ such that  \begin{itemize}
        \item $|f-g|_A<\epsilon$, 
        \item $f(x)=g(x)$ for all $x\in L$,
        \item $f'(x)=g'(x)$ for all $x\in L$
    \end{itemize}
\end{thmB}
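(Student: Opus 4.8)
The plan is to run the proof of Theorem~A (Thaler's Theorem~4) with the classical Arakelyan approximation theorem in place of Runge's theorem; the single new ingredient is a division device that packages the finitely many first-order interpolation conditions into a polynomial factor, and the only real difficulty is coping with the possible unboundedness of $A$.

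Write $L=\{a_1,\dots,a_n\}$. If $L=\emptyset$ the statement is exactly the classical Arakelyan theorem, so assume $L\neq\emptyset$; the requirement $f'(a_j)=g'(a_j)$ presupposes that $g$ is holomorphic in a neighbourhood of each $a_j$, so in particular $L\subset\text{Int}(A)$. First I would introduce the Hermite interpolation polynomial $Q$ of degree $<2n$, determined by $Q(a_j)=g(a_j)$ and $Q'(a_j)=g'(a_j)$ for every $j$, together with the polynomial
\[
p(z):=\prod_{j=1}^{n}(z-a_j)^{2},
\]
which has a double zero at each $a_j$, so that $p(a_j)=p'(a_j)=0$. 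The difference $g-Q$ is continuous on $A$, holomorphic on $\text{Int}(A)$, and vanishes to order $\geq 2$ at each $a_j$, so the quotient $\phi:=(g-Q)/p$ extends holomorphically across each $a_j$; hence $\phi\in C(A)$, $\phi$ is holomorphic on $\text{Int}(A)$, and since $A$ is untouched it is still a closed Weierstrass set, so the classical Arakelyan theorem applies to $\phi$.

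Next I would pick an entire function $S$ approximating $\phi$ on $A$ and set $f:=Q+pS$. This $f$ is entire, and since $f-g=pS-(g-Q)=p(S-\phi)$ vanishes to order $\geq 2$ at each $a_j$ (because $p$ does and $S-\phi$ is holomorphic there), we get $f(a_j)=g(a_j)$ and $f'(a_j)=g'(a_j)$ for every $j$; that is, the interpolation conditions hold \emph{exactly, for any choice of} $S$. The only remaining requirement is $|f-g|_A=|p(S-\phi)|_A<\epsilon$, and when $A$ is bounded this is immediate: apply Arakelyan with $|S-\phi|_A<\epsilon/(1+\sup_A|p|)$.

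The place where the argument must go beyond the compact setting of Theorem~A --- and what I expect to be the main obstacle --- is that when $A$ is unbounded the polynomial $p$ is unbounded on $A$, so a constant approximation error no longer controls $|p(S-\phi)|_A$: one needs $|S(z)-\phi(z)|<\epsilon/(1+|p(z)|)$ for \emph{every} $z\in A$, a weight that tends to $0$ at $\infty$. In general I would obtain such an $S$ from the tangential (Carleman-type) refinement of Arakelyan's theorem, noting that the interpolation is unaffected since $p(S-\phi)$ still vanishes to order $\geq 2$ at the $a_j$. In the concrete situations arising in this paper one can often sidestep the tangential version: if $A$ reaches $\infty$ only in directions along which some exponential $e^{Mz}$ decays (as for the half-strips of the Remark), then replacing $p(z)$ by $p(z)e^{Mz}$ keeps its double zeros at the $a_j$ --- hence the exact interpolation --- while making it bounded on $A$, so the uniform Arakelyan theorem alone yields $|f-g|_A<\epsilon$. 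With $S$ chosen in either of these ways the verification of the three displayed conditions is routine, exactly as in Theorem~A.
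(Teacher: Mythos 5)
Your proposal is correct and is exactly the argument the paper has in mind: the paper offers no proof of Theorem~B at all (it only says the result follows ``similarly'' to Theorem~A, i.e.\ Thaler's Runge-with-jets theorem), and that proof is precisely your device of subtracting a Hermite interpolation polynomial $Q$, dividing by $p(z)=\prod_j(z-a_j)^2$, approximating the quotient, and resumming $f=Q+pS$ so that the jet conditions hold exactly for any $S$.

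Two small precisions. First, you have correctly isolated the only real issue --- the unboundedness of $p$ on an unbounded Weierstrass set --- but the fix should be cited carefully: full Carleman-type tangential approximation (arbitrary error functions $\epsilon(z)\to 0$) is \emph{not} available on every Arakelyan set; by Nersesyan's theorem it additionally requires the ``long islands'' condition, which fails e.g.\ for a closed half-plane or half-strip, the very sets relevant to the Remark and to Theorem~\ref{thm:Baker}. What saves the argument is that you only need the error to decay like $\epsilon/(1+|p(z)|)$, i.e.\ polynomially, and Arakelyan's own quantitative refinement (approximation with error $\epsilon(|z|)$ whenever $\int_1^\infty t^{-3/2}\log(1/\epsilon(t))\,dt<\infty$) holds on every Arakelyan set and covers polynomial rates; alternatively your exponential damping $p(z)e^{Mz}$ works for all the half-plane--supported sets actually used in this paper. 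Second, your inference that $L\subset\mathrm{Int}(A)$ is not how the theorem is applied here: in the constructions the points of $L$ (such as $K_1=\{1\}$, $K_2=\{x_1\}$) are \emph{isolated} points of $A$, with $g$ given near them by an explicit holomorphic formula whose derivative supplies the data $g'(a_j)$. This does not damage your proof --- at an isolated $a_j$ one simply defines $\phi(a_j)$ arbitrarily, and $f=Q+pS$ still matches the prescribed jet exactly --- but the divisibility argument for $\phi=(g-Q)/p$ extending continuously genuinely needs each $a_j$ to be either interior or isolated (for a non-isolated boundary point and $g$ merely continuous, the quotient need not be bounded), so that hypothesis should be made explicit in the statement.
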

We also require the following elementary result on approximation.
\begin{lemma}[Corollary 2.7, \cite{MRW22}]\label{lem:MRW}
Let $U\subset\C$ be open, and $g:U\to\C$ be holomorphic. Suppose that $G\subset U$ is open and such that $g^n$ is well-defined and univalent on $G$ for some $n\geq 1$.
Then for every closed set $K\subset G$ and every $\epsilon>0$, there is a $\delta>0$ with the following property. For every holomorphic function $f:U\to\C$ with
$$|f-g|_U\leq\delta,$$ the function $f^n$ is defined and injective on $K$, with
$$|f^k-g^k|_K\leq\epsilon\quad \text{for}\ 1\leq k\leq n.$$
\end{lemma}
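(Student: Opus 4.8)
The plan is to split the statement into two parts. The first, \emph{continuity of the $n$-fold iteration} in the topology of uniform convergence on $U$, gives at once that $f^n$ is defined on $K$ and that $|f^k-g^k|_K\le\epsilon$ for $1\le k\le n$; the second, \emph{stability of univalence}, gives injectivity of $f^n$ on $K$. I treat $K$ compact (automatic when $U$ is bounded, and the case needed in the applications; the general closed case requires only the harmless extra fact that the suprema of $|g'|$ below are finite). To set the stage, fix a compact set $K'$ with $K\Subset K'\Subset G$, which exists since $K$ is compact and $G$ open. Because $g^n$ is well defined on $G$, all of $g^0(G),g(G),\dots,g^{n-1}(G)$ lie in $U$, so each $g^j(K')$ ($0\le j\le n-1$) is a compact subset of $U$; hence $\rho:=\min_{0\le j<n}\operatorname{dist}(g^j(K'),\C\setminus U)>0$ and $L:=\sup\{|g'(\zeta)|:\operatorname{dist}(\zeta,\bigcup_{j<n}g^j(K'))\le\rho/2\}<\infty$.

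For the continuity step, let $f:U\to\C$ be holomorphic with $|f-g|_U\le\delta$, put $C:=1+L+\dots+L^{n-1}$, and assume $\delta\le\rho/(2C)$. By induction on $k\le n$ I would show $f^k$ is defined and holomorphic on $\operatorname{int}(K')$ with $\epsilon_k:=|f^k-g^k|_{K'}\le C\delta$: the case $k=0$ is trivial, and if $\epsilon_k\le C\delta\le\rho/2$ then $f^k(K')$ lies in the $(\rho/2)$-neighbourhood of $g^k(K')\subset U$, so $f^{k+1}=f\circ f^k$ is defined on $\operatorname{int}(K')$, and since for $z\in K'$ the segment $[g^k(z),f^k(z)]$ stays within $\rho/2$ of $g^k(K')$, the mean-value inequality for holomorphic maps yields
$$|f^{k+1}-g^{k+1}|_{K'}\le|f-g|_U+|g\circ f^k-g\circ g^k|_{K'}\le\delta+L\epsilon_k\le\delta(1+L+\dots+L^k)\le C\delta.$$
Imposing in addition $\delta\le\epsilon/C$ gives $|f^k-g^k|_K\le|f^k-g^k|_{K'}\le\epsilon$ for $1\le k\le n$, which is one of the two conclusions.

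For the stability step I claim there is $\delta_0>0$ such that $|f-g|_U\le\delta_0$ forces $f^n$ injective on $K$; then $\delta:=\min(\delta_0,\epsilon/C,\rho/(2C))$ proves the lemma. Suppose not: pick holomorphic $f_m$ with $|f_m-g|_U\le1/m$ and $a_m\ne b_m$ in $K$ with $f_m^n(a_m)=f_m^n(b_m)$, where for large $m$ the previous step makes $f_m^n$ holomorphic on $\operatorname{int}(K')$ with $|f_m^n-g^n|_{K'}\le C/m$. After passing to a subsequence $a_m\to a\in K$, $b_m\to b\in K$, and $g^n(a)=\lim_m f_m^n(a_m)=\lim_m f_m^n(b_m)=g^n(b)$; univalence of $g^n$ on $G\supset K$ forces $a=b$. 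Since $g^n$ is univalent on the open set $G$, $(g^n)'(a)\ne0$; set $c:=|(g^n)'(a)|$ and choose $r>0$ with $\overline{\Delta(a,2r)}\subset\operatorname{int}(K')$ and $|(g^n)'-(g^n)'(a)|<c/4$ on $\overline{\Delta(a,2r)}$. Cauchy's estimate for $f_m^n-g^n$ on $\overline{\Delta(z,r)}\subset\operatorname{int}(K')$ gives $|(f_m^n)'(z)-(g^n)'(z)|\le r^{-1}|f_m^n-g^n|_{K'}\le C/(rm)$ for $z\in\Delta(a,r)$, so for large $m$ the image $(f_m^n)'(\Delta(a,r))$ lies in the disk of radius $c/2$ about $(g^n)'(a)$, which omits $0$ and sits in a half-plane $\{\zeta:\operatorname{Re}(e^{i\alpha}\zeta)>0\}$. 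By the Noshiro--Warschawski criterion $f_m^n$ is univalent on the convex disk $\Delta(a,r)$ for large $m$, contradicting $f_m^n(a_m)=f_m^n(b_m)$, $a_m\ne b_m$, since $a_m,b_m\to a$ puts both points in $\Delta(a,r)$ for $m$ large.

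The stability step is the crux; the continuity step is routine bookkeeping with Lipschitz constants. What makes the stability argument work is the interplay of a \emph{global} input — univalence of $g^n$ on the \emph{open} set $G$, which rules out collisions of points of $K$ lying far apart — with a \emph{local} input, namely that univalence of a holomorphic map on a small disk is an open condition on its derivative (Cauchy estimates plus Noshiro--Warschawski); committing to the auxiliary compact set $K'$ with $K\Subset K'\Subset G$ at the outset is precisely what lets the local argument run inside the region where the continuity step already supplies uniform control. Alternatively, the stability step can be made quantitative: the secant quotient $(g^n(z)-g^n(w))/(z-w)$, extended by $(g^n)'(z)$ on the diagonal, is holomorphic and zero-free on $G\times G$, hence bounded below on $K'\times K'$, and is perturbed by only $O(\delta)$ there via a Cauchy-kernel representation, so the corresponding quotient for $f^n$ is also bounded below and $f^n$ is injective on $K'\supset K$.
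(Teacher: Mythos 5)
The paper offers no proof of this lemma at all: it is imported verbatim as Corollary~2.7 of \cite{MRW22}, so your argument is necessarily a different route, and it is a correct, self-contained one. Your continuity step is the standard Lipschitz bookkeeping (buffer compact $K'$ with $K\Subset K'\Subset G$, a uniform distance $\rho$ from the intermediate images $g^j(K')$ to $\partial U$, and the recursion $\epsilon_{k+1}\le\delta+L\epsilon_k$), and your stability step correctly combines the global input (univalence of $g^n$ on $G$ collapses any putative collision $a_m\ne b_m$ to a single limit point $a$ via compactness of $K$) with the local input (Cauchy estimates push $(f_m^n)'$ into a zero-free half-plane near $a$, so Noshiro--Warschawski gives univalence on a fixed disk $\Delta(a,r)$, contradicting $a_m,b_m\to a$). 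The proof in \cite{MRW22} follows the same two-part skeleton but obtains the injectivity from Hurwitz's theorem / the argument principle rather than Noshiro--Warschawski; the two are interchangeable here, and your quantitative alternative via the secant quotient bounded below on $K'\times K'$ is also sound. One remark worth keeping: the statement as transcribed in this paper says ``closed set $K\subset G$'', but the result really requires $K$ compact (as in the original Corollary~2.7 and in every application made of it here, where $K$ is $\Omega_{k+1}$ or $\overline W$); for an unbounded closed $K$ your constants $L$ and $\rho$ need not exist and the conclusion can fail, so your upfront restriction to compact $K$ is the correct reading rather than a loss of generality.
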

	\section{Proof of Theorem \ref{thm:EO1}}
 We use a similar vein as the argument in \cite{Thaler}.

It is clear that for any continuum set $J$ without interiors, there exists a sequence, say $(B_{m,n})_{0\leq m, 1\leq n\leq 2^m}$, of disjoint open disks whose Hausdorff limit is $J$.

Without loss of generality, we assume that $J\subset \Delta(0, \frac{1}{3})$, 
Then there exists a sequence $(X_j)_{j=0}^{\infty}$ of compact sets such that 
\begin{itemize}
    \item $X_0\subset\overline{\Delta(0, \frac{1}{3})}$,
    \item $X_j\subset\text{Int}(X_{j-1})$ for all $j\geq 1$,
    \item $\cap_{j=0}^{\infty}X_j=J$.
\end{itemize}
Thus, we can choose the sequence $B_{m,n}:=\Delta(a_{m,n}, b_m)$ of open disks such that 
\begin{itemize}
    \item $B_{m,n}\subset\text{Int}(X_m)\setminus X_{m+1}$, for $0\leq m, 1\leq n\leq 2^m$
    \item $B_{m,n}\cap B_{m,n_1}=\emptyset$ for $n\neq n_1$,
    \item $(B_{m,n})'=J$, that is, $J$ is the Hausdorff limit of the sets $B_{m,n}$.
\end{itemize}
Now, we choose a bounded connected regular open set $\Omega\Subset\Delta(\frac{2}{3},\frac{1}{9})$ whose closure has a connected complement,  such that there exists a sequence $(\Omega_n)_{n\geq 0}$ of compact neighborhoods of $\overline{\Omega}$ satisfying
 \begin{itemize}
     \item[(i).] $\C\setminus\Omega_n$ is connected for all $n\in\N$,
     \item[(ii).] $\Omega_{n+1}\subseteq\text{Int}(\Omega_n)$ for all $n\in\N$,
     \item[(iii).] $\overline{\Omega}=\cap_{n\in\N}\Omega_n$.
 \end{itemize}
  We also choose a sequence $(x_n)\in\C\setminus\overline{\Omega}$ such that 
 \begin{itemize}
     \item[(1).] $x_n\in\text{Int}(\Omega_{n-1})\setminus\Omega_n$ for all $n\geq 1$,
     \item[(2).] $(x_n)'=\partial\Omega$
 \end{itemize}
 Set 
 $\Delta_m:=\Delta(4m,1)$, $\Delta_{m,n}=\Delta(b_{m,n},1),$ for $m\geq 1$ and $1\leq n\leq 2^m,$  where $b_{m,n}=4m+2n\sqrt{-1}.$ 
 
 Let $(C_k)_{k\geq 1}$ be a sequence of bounded simply connected domains such that 
 $$\overline{\Delta(0,1)}\subset C_1,\ \ C_k\Subset C_{k+1},\ \ \bigcup_{k\geq 1}C_k=\C$$ and 
 $$\Delta_{k,n}\cup\Delta_k\subset C_{k+1}\setminus\overline{C_{k}}, 1\leq k,  1\leq n\leq 2^k.$$
Now, we will construct our function $f$ by taking a limit of an inductively constructed sequence of entire functions as the following.
 \begin{lemma}\label{lem:1}
     Let $(\Omega_n)_{n\geq 0}$, $(x_n)_{n\geq 1}$, and $(B_{m,n})_{0\leq m, 1\leq n\leq 2^m}$ be as above. Let $N_m=2^{m+1}-2$, $m\geq 0$. Then there exists a sequence $(f_k)_{k\geq 1}$ of entire functions and a sequence $(x_n^j)_{n, j\geq1}$ of points such that for all $k\geq 1$:
     \begin{enumerate}
         \item $|f_{k+1}-f_k|_{\overline{C_k}}\leq 2^{-k}$,
         \item $f^{N_{m}+2n-1}_k(\Omega_k)\subset B_{m,n}$, for  $0\leq m\leq k-1$ and $1\leq n\leq 2^{m}$.
         \item $f^{N_m}_k(\Omega_k)\subset \Delta_m$, for $1\leq m\leq k$,
         \item $f_k^j|_{\Omega_k}$ and $f_k^j|_{B_{m,n}}$ are univalent for all $1\leq j\leq N_{k-1}$, $0\leq m\leq k-1$ and $1\leq n\leq 2^m$.
         \item $f_k^j(x_n)=x_n^j$ for all $0\leq j\leq N_{k-1}$ and $1\leq n\leq k$, (here $x_n^0=x_n$).
         \item $f_k^{N_{n}}(x_n)=1$ for all $1\leq n\leq k$,
         \item $f_k(1)=1$ and $f_k'(1)=\frac{1}{2}.$
     \end{enumerate}
 \end{lemma}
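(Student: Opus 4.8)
The plan is to construct the sequence $(f_k)$ by induction on $k$, using Theorem~A as the engine at each step. The base case $k=1$ asks for an entire function $f_1$ with $f_1^{N_0}(\Omega_1)=f_1^0(\Omega_1)=\Omega_1\subset\Delta_1$ (after a preliminary affine normalization we may assume $\Omega_1\subset\Delta(4,1)$, so this is automatic) together with $f_1(1)=1$, $f_1'(1)=\tfrac12$, and the univalence conditions, which for $k=1$ are vacuous since $N_0=0$. So $f_1$ can be taken to be a suitable affine map or the identity near the relevant sets, adjusted by Theorem~A to fix the value and derivative at $1$; conditions (5)--(6) for $n=1$ just define $x_1^0=x_1$ and require $f_1^{N_1}(x_1)=f_1^2(x_1)=1$, which we arrange by prescribing $f_1$ on the (finite, disjoint-from-everything) orbit segment.

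For the inductive step, suppose $f_k$ is built. The idea is to apply Theorem~A with the compact sets being $\overline{C_k}$ (carrying the map $f_k$, so that (1) holds and, by Lemma~2.3, all the open conditions on iterates of $f_k$ that were already true on the slightly larger domains are inherited), together with the disks $\Delta_k=\Delta(4k,1)$ and $\Delta_{k,n}=\Delta(b_{k,n},1)$, which lie in $C_{k+1}\setminus\overline{C_k}$ and are pairwise disjoint from each other and from $\overline{C_k}$. On $\Delta_k$ we prescribe an affine map sending $\Delta_k$ inside the union of the next-level disks $\bigcup_{n=1}^{2^{k+1}}\Delta_{k+1,n}$ in the ``parking-lot'' region, arranged so that iterating lands $\Omega_{k+1}$ successively in the disks $B_{k,n}$ for $n=1,\dots,2^{k}$ (this produces condition (2) for $m=k$ and condition (3) for $m=k+1$ once we also route things back to $\Delta_{k+1}$); on each $\Delta_{k,n}$ we prescribe a contraction with fixed point $1$, so that orbits entering $\Delta_{k,n}$ are eventually captured and (7) is propagated. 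On the finite orbit set $L_k$ (the points $x_n^j$ for $n\le k+1$, $0\le j\le N_k$, together with $1$) we prescribe exact values and derivatives to maintain (5), (6), (7); the new point $x_{k+1}$ and its orbit $x_{k+1}^j$ get defined here, with the orbit routed through $\Delta_{k+1}$ so that $f_{k+1}^{N_{k+1}}(x_{k+1})=1$. Since all these carrier sets are pairwise disjoint with connected complements, Theorem~A yields an entire $f_{k+1}$ that is $2^{-k}$-close to $f_k$ on $\overline{C_k}$, agrees with the prescribed local maps up to $\epsilon$ elsewhere, and is exact on $L_k$; choosing $\epsilon$ small and invoking Lemma~2.3 makes the open conditions (2)--(4) hold for $k+1$.

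The bookkeeping that makes this work is the indexing $N_m=2^{m+1}-2$: the block of $2^{m}$ steps between time $N_m$ and $N_{m+1}=N_m+2\cdot 2^{m}$ is exactly long enough to visit all $2^{m}$ disks $B_{m,n}$ (one ``visit'' step plus one ``return'' step each), which is why (2) reads $f_k^{N_m+2n-1}(\Omega_k)\subset B_{m,n}$ and why after the block we are back at $\Delta_{m+1}$ giving (3). So the heart of the construction is designing, at stage $k$, the affine maps on $\Delta_k$ and on the intermediate disks so that a single $f_{k+1}$-orbit of $\Omega_{k+1}$ threads: $\Delta_1\to B_{0,1}\to\Delta_2\to B_{1,1}\to B_{1,2}\to\Delta_3\to\cdots$, spending the prescribed number of iterates in each region. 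This is routine once set up, because disks can be mapped affinely into disks with complete freedom of center and (small) radius, and the regions $C_{k+1}\setminus\overline{C_k}$ are large enough to hold all the auxiliary disks at each level.

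The main obstacle is \emph{consistency of the prescriptions across stages}: at stage $k+1$ we re-prescribe $f_{k+1}$ on sets ($\Delta_k$, $\Delta_{k,n}$, and part of the orbit) where $f_k$ was \emph{also} defined and where earlier closeness estimates $|f_{j+1}-f_j|_{\overline{C_j}}\le 2^{-j}$ must keep accumulating to a limit; one must check that $\Delta_k,\Delta_{k,n}\subset C_{k+1}\setminus\overline{C_k}$ (so they are outside the set $\overline{C_k}$ on which we only perturb by $2^{-k}$), hence re-defining $f_{k+1}$ wholesale there costs nothing, while the orbit points that lie inside $\overline{C_k}$ are handled by the \emph{exact} interpolation clause of Theorem~A rather than by perturbation. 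Verifying that the finitely many prescribed orbit points at each stage are genuinely distinct and disjoint from the carrier disks (so that Theorem~A's hypotheses are met) is the fussy part, but it follows from choosing the auxiliary disks $\Delta_k,\Delta_{k,n}$ far out along the real axis and the points $x_n^j$ inductively in general position. Granting that, the limit $f=\lim f_k$ exists locally uniformly by (1) and the telescoping estimate, is transcendental because its dynamics has a wandering domain containing $\Omega=\lim\Omega_k$, and conditions (2)--(7) pass to the limit on compact subsets, which is what the proof of Theorem~\ref{thm:EO1} will need.
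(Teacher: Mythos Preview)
Your overall architecture---induction on $k$, Theorem~A as the engine, carrying $f_k$ on $\overline{C_k}$ for condition (1), exact interpolation on the finite orbit points, and Lemma~\ref{lem:MRW} to propagate (2)--(4)---is correct and matches the paper. The routing you describe, however, would not produce the required orbit.

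The intended itinerary is
\[
\Omega\to B_{0,1}\to\Delta_1\to B_{1,1}\to\Delta_{1,1}\to B_{1,2}\to\Delta_2\to B_{2,1}\to\Delta_{2,1}\to\cdots,
\]
alternating between the small disks $B_{m,n}$ near $J$ and the far ``parking'' disks. So each $\Delta_{k,n}$ is a \emph{transit} disk: it must be sent (by a linear map $g_{k,n}$) into $B_{k,n+1}$. Your prescription ``on each $\Delta_{k,n}$ we prescribe a contraction with fixed point $1$'' would trap the $\Omega$-orbit at the attracting fixed point as soon as it enters any $\Delta_{k,n}$, killing condition (2) for $n\geq 2$. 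Similarly, on $\Delta_k$ one takes a compact $K_3\subset\Delta_k$ containing $f_k^{N_k}(\Omega_{k+1})$ and prescribes a linear map into $B_{k,1}$, not into $\bigcup_n\Delta_{k+1,n}$.

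There is a second, related gap: you never prescribe anything on the $B_{m,n}$, so the orbit has no way to leave them. In the paper, since every $\overline{B_{m,n}}\subset\overline{\Delta(0,\tfrac13)}\subset C_1$, the linear maps $h_{m,n}\colon B_{m,n}\to\Delta_{m,n}$ (and $h_{m,2^m}\colon B_{m,2^m}\to\Delta_{m+1}$) are prescribed for \emph{all} $m\ge 0$ already at stage $k=1$; thereafter they are only perturbed by $\sum_j\epsilon_j$. This is also why the base case is not trivial: at $k=1$ one must approximate simultaneously on $\{1\}$, $\{x_1\}$, $\Omega_1$, and all the $\overline{B_{m,n}}$, arranging $f_1(\Omega_1)\subset B_{0,1}$ and $f_1^{2}(\Omega_1)\subset\Delta_1$ (note condition (3) at $k=1$ requires $f_1^{N_1}$, not $f_1^{N_0}$), together with $f_1^{2}(x_1)=1$. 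Once the routing is corrected in this way, your framework carries through exactly as in the paper.
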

 A simple flowchart can be shown as the following:
 $$\Omega_1\to B_{0,1}\to\Delta_1;$$
 $$\Omega_2\to B_{0,1}\to\Delta_1\to B_{1,1}\to\Delta_{1,1}\to B_{1,2}\to\Delta_2;$$
$$\cdots$$
$$\Omega_k\to \cdots \to \Delta_m\to B_{m,1}\to\Delta_{m,1}\to B_{m,2}\to\cdots\to B_{m,2^{m}}\to\cdots \to\Delta_k;$$
$$\cdots.$$
It is clear that from $\Omega_k$ to $\Delta_k$, we have to take $N_k:=2(2^k-1)$ steps.
\begin{proof}[Proof of Lemma \ref{lem:1}]
First, we construct an entire function $f_1$ that satisfies:
   \begin{itemize}
   \item[(i).] $f_1^{N_0+1}(\Omega_1)=f_1(\Omega_1)\subset B_{0,1}$,
   \item[(ii).] 
   $f_1^{N_1}(\Omega_1)=f_1^2(\Omega_1)\subset\Delta_1$,
   \item[(iii).] $f_1^j|_{\Omega_1}$ and $f_1^j|_{B_{0,1}}$ are univalent for $1\leq j\leq N_0$,
   \item[(iv).] $f_1^{N_1}(x_1)=f_1(x_1)=1$,
   \item[(v).] $f_1(1)=1$ and $f_1'(1)=\frac{1}{2}$.
   \end{itemize}

Define compact sets $K_1=\{1\}$, $K_2=\{x_1\}$, $K_3=\Omega_1$ and $K_{m,n}=\overline{B_{m,n}}$ for $0\leq m, 1\leq n\leq 2^m$. It is clear that all of them are disjoint and their complements are connected. 

Let $h_1(z)=\frac{1}{2}z+\frac{1}{2}$, $h_2(z)=1$, $h_3$ be a non-constant linear map such that $h_3(K_3)\Subset B_{0,1}$. Let 
$h_{m,n}(z)=\frac{1}{2}(z-a_{m,n})+b_{m,n}$, $0\leq m, 1\leq n\leq 2^m-1$, and $h_{m,2^m}(z)=z-a_{m,2^m}+4(m+1)$ for $m\geq 1$, that is, 
\begin{align}\label{eqn:hmn}
h_{m,n}(B_{m,n})\subset \Delta(b_{m,n}, \frac{b_m}{2})\Subset\Delta_{m,n}, 1\leq m, 1\leq n\leq 2^m-1\end{align} and \begin{align}\label{eqn:hm}
h_{m,2^m}(B_{m,2^m})\subset\Delta(4(m+1), b_m)\Subset \Delta_{m+1}, m\geq 1.\end{align}

By Theorem A, for every $\epsilon_1\in(0,1)$, there exists an entire function $f_1$ such that 
\begin{itemize}
    \item $|f_1-h_j|_{K_j}\leq\frac{\epsilon_1}{2}$ for all $1\leq j\leq 3$,
    \item $|f_1-h_{m,n}|_{K_{m,n}}\leq\frac{\epsilon_1}{2}$ for all $0\leq m, 1\leq n\leq 2^m$.
    \item $f_1(x_1)=1$
    \item $f_1(1)=1$ and $f_1'(1)=\frac{1}{2}.$
\end{itemize}
Clearly, (i) and (ii) hold as long as $\epsilon_1$ is sufficiently small. (iii) follows from the injectivity of $h_3$ and $h_{0,1}$ on $\Omega_1$ and $B_{0,1}$ respectively.
(iv) can be obtained from $f_1^{N_1}(x_1)=f_1(f_1(x_1))=f_1(1)=1$. Clearly, $f_1$ satisfies all conditions of Lemma \ref{lem:1}.
Now, we can define $x_2^j=f_1^j(x_2)$ for $0\leq j\leq N_1$. 

Suppose that we have already obtained entire functions $f_1, \dots, f_k$ and points $x_n^j$, where $0\leq j\leq N_{n-1}$ and $1\leq n\leq k+1$.

Let us define compact set $K_1:=\overline{C_k}$, $K_2:=\{x_{k+1}^{N_k}\}=\{f_k^{N_k}(x_{k+1})\}$ and $K_{k,n}:=\overline{\Delta_{k,n}}, 1\leq n\leq 2^k-1$. By induction, we have 
\begin{itemize}
    \item $f_k^{N_{m}+2n-1}(\Omega_k)\subset B_{m,n}$, for $1\leq m\leq k-1$ and $1\leq n\leq 2^m$
    \item  $f_k^{N_m}(\Omega_k)\subset\Delta_m$ for $1\leq m\leq k$.
\end{itemize}
Thus, compact sets $K_1$, $K_2$, $f_k^{N_k}(\Omega_{k+1})$ and $K_{k,n}$ are pairwise disjoint and have connected complement, hence there exists a compact set $K_3\subset\Delta_k$ such that $f_k^{N_k}(\Omega_{k+1})\subset f_k^{N_k}(\Omega_k)\subset\text{Int}(K_3)$ and $K_j$ and $K_{k,n}$ are pairwise disjoint and have connected complement, for $1\leq j\leq 3$ and $1\leq n\leq 2^k-1$.
Now, we define functions
$$g_1(z):=f_k(z),\ g_2(z):=1,$$ and let $g_3$ be a non-constant linear map that maps $K_3$ into $B_{k,1}$, and $g_{k,n}$ be a non-constant linear map such that \begin{align}\label{eqn:gkn}
g_{k,n}(K_{k,n})\Subset \Delta(a_{k,n+1},\frac{b_k}{2})\subset B_{k,n+1}, 1\leq n\leq 2^k-1.
\end{align}

By Theorem A, for every $\epsilon_{k+1}$, there exists an entire function $f_{k+1}$ such that:
\begin{itemize}
    \item[(i).] $|f_{k+1}-g_j|_{K_j}\leq\epsilon_{k+1}$ for $j=1,2,3$, and $|f_{k+1}-g_{k,n}|_{K_{k,n}}\leq\epsilon_{k+1}$ for $1\leq n\leq 2^k-1$,
    \item[(ii).] $f_{k+1}(x_n^j)=f_k(x_n^j)$ for all $0\leq j\leq N_{n-1}$ and $1\leq n\leq k$, (here $x_n^0:=x_n$,
    \item[(iii).] $f_{k+1}(x_{k+1}^j)=f_k(x_{k+1}^j)$ for all $0\leq j\leq N_k-1$,
    \item[(iv).] $f_{k+1}(x_{k+1}^{N_k})=1$,
    \item[(v).] $f_{k+1}(1)=1$ and $f_{k+1}'(1)=\frac{1}{2}$.
    \end{itemize}
    where $\epsilon_{k+1}\leq 2^{-k}$ can be chosen to be sufficiently small such that:
    \begin{itemize}
        \item[(a).] $f_{k+1}^{N_m+2n-1}(\Omega_{k+1})\subset B_{m,n},$ for all $1\leq m\leq k$ and  $1\leq n\leq 2^m$. 
      \item[(b).] $f_{k+1}^{N_{m}}(\Omega_{k+1})\subset\Delta_{m}$, for all $1\leq m\leq k+1$.
      \item[(c).] $f_{k+1}^j|_{\Omega_{k+1}}$ and $f_{k+1}^j|_{B_{m,n}}$ are univalent for all $1\leq j\leq N_{k}$, $0\leq m\leq k$ and $1\leq n\leq 2^m$.
    \end{itemize}
      Indeed, for (a), we first show that
      for 
      $ 1\leq j\leq 2^k-1$,  
      $$ f_{k+1}(B_{k,j})\subset \Delta_{k,j},\ f_{k+1}(\Delta_{k,j})\subset B_{k,j+1},\  f_{k+1}(B_{k,2^k})\subset\Delta_{k+1}.$$

    Recall that $h_{m,n}(z)=\frac{1}{2}(z-a_{m,n})+b_{m,n}$, $h_{m,2^m}(z)=z-a_{m,2^m}+4(m+1)$ and $B_{m,n}=\Delta(a_{m,n},b_m)$ for $0\leq m$, $1\leq n\leq 2^m$. Then for $1\leq j\leq 2^k-1$, from (\ref{eqn:hmn}), we have
    \begin{align*}
        |f_{k+1}-b_{k,j}|_{B_{k,j}}&\leq\sum_{n=1}^{k}|f_{n+1}-f_n|_{B_{k,j}}+|f_1-h_{k,j}|_{B_{k,j}}+|h_{k,j}-b_{k,j}|_{B_{k,j}}\\
        &\leq \sum_{n=1}^{k+1}\epsilon_n+\frac{b_k}{2}\leq\sum_{n=1}^{\infty}\epsilon_n+\frac{b_k}{2}\leq\frac{1}{2}+\frac{b_k}{2}<1.
    \end{align*} 
   Similarly, from (\ref{eqn:hm}), we have 
    \begin{align*}
        |f_{k+1}-4(k+1)|_{B_{k,2^k}}
        &\leq \sum_{n=1}^{k+1}\epsilon_n+b_k\leq\sum_{n=1}^{\infty}\epsilon_n+b_k<1,
    \end{align*} 
   hence
    $$f_{k+1}(B_{k,2^k})\subset \Delta_{k+1}.$$
     Finally, from (\ref{eqn:gkn}), we have
    \begin{align*}
        |f_{k+1}-a_{k,j+1}|_{\Delta_{k,j}}&\leq|f_{k+1}-g_{k,j}|_{\Delta_{k,j}}+|g_{k,j}-a_{k,j+1}|_{\Delta_{k,j}}\\
        &\leq \epsilon_{k+1}+\frac{b_k}{2}<b_k.
    \end{align*} 
     Observe that $f_k^{N_m}, 1\leq m\leq k$ is well defined and univalent on $\Omega_{k+1}$ by induction, with $f_k^{N_m+2n-1}(\Omega_{k+1})\subset B_{m,n}, 1\leq m\leq k-1, 1\leq n\leq 2^m$, and $f_k^{N_k}(\Omega_{k+1})\subset \text{Int}(K_3).$
     By Lemma \ref{lem:MRW}, we have
    \begin{equation}\label{eqn:k}
    |f_{k+1}^{j}-f_k^{j}|_{\Omega_{k+1}}<\epsilon,\ 1\leq j\leq N_k,
    \end{equation}
    thus for $\epsilon$ sufficiently small, 
    $$f_{k+1}^{N_m+2n-1}(\Omega_{k+1})\subset B_{m,n}, 1\leq m\leq k-1, 1\leq n\leq 2^m$$ and
    $$f_{k+1}^{N_k}(\Omega_{k+1})\subset\text{Int}(K_3).$$
    Therefore,
    $$f_{k+1}^{N_k+2n-1}(\Omega_{k+1})\subset f_{k+1}^{2n-1}(f_{k+1}^{N_k}(\Omega_{k+1}))\subset f_{k+1}^{2n-1}(\text{Int}(K_3)).$$
    For sufficiently small $\epsilon_{k+1}>0$, the relation
    $$f_{k+1}(\text{Int}(K_3))\subset B_{k,1}$$ follows from $|f_{k+1}-g_3|_{K_3}\leq\epsilon_{k+1}.$
    Thus, we have
    $$f_{k+1}^{2n-1}(\text{Int}(K_3))\subset f_{k+1}^{2n-1}(B_{k,1})\subset B_{k,n}, 1\leq n\leq 2^k.$$

For (b), recall that $N_{k+1}=N_k+2^k+1$, then from (a),
    $$f^{N_{k+1}}_{k+1}(\Omega_{k+1})\subset f_{k+1}(f_{k+1}^{N_k+2^k}(\Omega_{k+1}))\subset f_{k+1}(B_{k,2^k})\subset\Delta_{k+1},$$
    and for $1\leq m\leq k$, the result follows by the induction and (\ref{eqn:k}) as long as $\epsilon$ is small enough.

    For (c), the result can be obtained from Lemma \ref{lem:MRW}.

    Finally, we define $x_{k+2}^j:=f_{k+1}^j(x_{k+2})$ for $1\leq j\leq N_{k+1}.$ The entire function $f_{k+1}$ now is constructed and satisfies all the conditions (1)-(7) of Lemma \ref{lem:1}, hence this completes the inductive steps.

    Now, let us go back to show Theorem \ref{thm:EO1}. Let $N_m=2^{m+1}-2$ and let $f$ be the limit of the sequence $(f_k)_{k\geq 1}$ of entire functions given by Lemma \ref{lem:1}. It is obvious that
    \begin{enumerate}
        \item $f^{N_m+n}(\Omega)\subset B_{m,n}$ for $0\leq m,$ and $1\leq n\leq 2^m$,
        \item $f^{N_m}(\Omega)\subset\Delta_m$ for $1\leq m$,
        \item $f^n(\Omega)\subset\cup_{m\geq 0}\Delta_m$,
        \item $f^{N_n}(x_n)=1$ for all $n\geq 1$,
        \item $f^n|_{\Omega}$ is univalent for $n\geq 1$,
        \item $f(1)=1$ and $f'(1)=\frac{1}{2}.$
    \end{enumerate}
    Thus, $\Omega$ is contained in a Fatou set. The pre-images of the attracting fixed point 1 accumulate everywhere on $\partial\Omega$, hence $\Omega$ is a Fatou component which is an oscillating wandering domain from (1) and (2) above.
    Finally, property (1) implies that any point $a\in J$ is a limit point for every orbit originating from $\Omega$, hence $J= \omega_f(\Omega)$.
\end{proof}

 \section{Proof of Theorem \ref{thm:F}}
     Similar to the argument of Theorem \ref{thm:EO1}, for any bounded connected regular open set $D$ whose closure has a connected complement, after a linear change of coordinates, we can choose a sequence $(D_j)_{j=0}^{\infty}$ of compact sets such that 
     \begin{itemize}
     \item $\Delta(0,c)\Subset D\subset D_0\subset\overline{\Delta(0, \frac{1}{3})}$, for some $c>0$,
    \item $D_j\subset\text{Int}(D_{j-1})$ for all $j\geq 1$,
    \item $\cap_{j=0}^{\infty}D_j=\overline{D}$.
\end{itemize}
Then we can choose a sequence $(B_{m,n})_{0\leq m, 1\leq n\leq 2^m}$ of open disks such that 
\begin{itemize}
    \item $B_{m,n}\subset\text{Int}(D_m)\setminus D_{m+1}$, for $0\leq m, 1\leq n\leq 2^m$
    \item $B_{m,n}\cap B_{m,n_1}=\emptyset$ for $n\neq n_1$,
    \item $(B_{m,n})'=\partial D$, that is, $\partial D$ is the Hausdorff limit of the sets $B_{m,n}$.
\end{itemize}

    We take $\Omega$, $\Delta_{m,n}$, $C_k$, $N_m$ and $h_{m,n}$ as in the proof of Theorem \ref{thm:EO1}. 

        Now, we define a linear map $h_{\delta}$ such that $h_{\delta}(D_0)\Subset \Delta(0,\frac{c}{2})\subset D^{-\delta}$. By induction as in Lemma \ref{lem:1}, we can obtain a sequence $(f_k)_{k\geq 1}$ of entire functions and a sequence $(x_n^j)_{n,j\geq 1}$ of points such that for all $k\geq 1$:
        \begin{enumerate}
         \item $|f_{k+1}-f_k|_{\overline{C_k}}\leq 2^{-k}$,
         \item $f^{N_{m}+2n-1}_k(\Omega_k)\subset B_{m,n}$, for  $0\leq m\leq k-1$ and $1\leq n\leq 2^{m}$.
         \item $f^{N_m}_k(\Omega_k)\subset \Delta_m$, for $1\leq m\leq k$,
         \item $f_k^j|_{\Omega_k}$ and $f_k^j|_{B_{m,n}}$ are univalent for all $1\leq j\leq N_{k-1}$, $0\leq m\leq k-1$ and $1\leq n\leq 2^m$.
         \item $f_k^j(x_n)=x_n^j$ for all $0\leq j\leq N_{k-1}$ and $1\leq n\leq k$, (here $x_n^0=x_n$).
         \item $f_k^{N_{n}}(x_n)=1$ for all $1\leq n\leq k$,
         \item $f_k(1)=1$ and $f_k'(1)=\frac{1}{2}$,
         \item $f_k(D^{-\delta})\Subset D^{-\delta}.$
     \end{enumerate}
     Indeed, we define compact sets $K_1$, $K_2$, $K_3$ and $K_{m,n}$ as in the proof of Lemma \ref{lem:1}. Let $K$ be a compact set such that $D^{-\delta}\subset K\subset D$.
     Then, those compact sets are disjoint and their complements are connected. Therefore, by Theorem A, given $\epsilon_1>0$, there exists an entire function $f_1$ such that
     \begin{itemize}
         \item $|f_1-h_j|_{K_j}\leq \epsilon_1/2$ for all $j=1,2,3$,
         \item $|f_1-h_{m,n}|_{K_{m,n}}\leq \epsilon_1/2$ for all $0\leq m, 1\leq n\leq 2^m$
         \item $|f_1-h_{\delta}|_K\leq \epsilon_1/2,$
         \item $f_1(x_1)=1$, $f_1(1)=1$ and $f'(1)=1/2.$
     \end{itemize}
     Then, the entire function $f_1$ satisfies the following properties:
     \begin{itemize}
   \item[(i).] $f_1^{N_0+1}(\Omega_1)=f_1(\Omega_1)\subset B_{0,1}$,
   \item[(ii).] 
   $f_1^{N_1}(\Omega_1)=f_1^2(\Omega_1)\subset\Delta_1$,
   \item[(iii).] $f_1^j|_{\Omega_1}$ and $f_1^j|_{B_{0,1}}$ are univalent for $1\leq j\leq N_0$,
   \item[(iv).] $f_1^{N_1}(x_1)=f_1(x_1)=1$,
   \item[(v).] $f_1(1)=1$ and $f_1'(1)=\frac{1}{2}$,
   \item [(vi).] $f_1(D^{-\delta})\Subset D^{-\delta}$.
   \end{itemize}
     
     We only check if (vi) holds, others can be done as in the proof of Lemma \ref{lem:1}.
     This indeed can be attained by $h_{\delta}(D_0)\Subset \Delta(0,\frac{c}{2})$ and
     \begin{align*}
         |f_{1}-h_{\delta}|_{D^{-\delta}}\leq \epsilon_1<\frac{c}{2}
     \end{align*} for small $\epsilon_1$. 
     The remaining part can be achieved by induction as in the proof of Lemma \ref{lem:1}.
     One can see that (8) holds inductively, as
      \begin{align*}
         |f_{k+1}-h_{\delta}|_{D^{-\delta}}\leq \sum_{j=1}^{\infty}\epsilon_j<\frac{c}{2}
     \end{align*} for small $\epsilon_j$ with $\epsilon_{j+1}<\epsilon_j$.
     Hence, we can construct an entire function $f$ such that $f(D^{-\delta})\subset D^{-\delta}$, which implies that $D^{-\delta}$ is an invariant Fatou set, while its boundary $\partial D$ is the limit of subsequences of $f^n$ on $\Omega$, and then a component of Julia set, thus there exists an invariant Fatou component $U_{\delta}$ such that $D^{-\delta}\subset U_{\delta}\subset D$.
   
\section{Proof of Theorem \ref{thm:Baker}}
     
         Without loss of generality, after a suitable affine transformation,  we can assume that $$U\subset\mathbb{H}:=\{z\in\C: \Re z<0\}.$$ For $j\geq 0$, define
         $$U_j:=\{z\in\C: \text{dist}(z, U)\leq \frac{1}{2^j}\}.$$

         Now, we choose a sequence $(B_{m,n})_{0\leq m, 1\leq n\leq 2^m}$ of open disks such that 
\begin{itemize}
    \item $B_{m,n}\subset\text{Int}(U_m)\setminus U_{m+1}$, for $0\leq m, 1\leq n\leq 2^m$
    \item $B_{m,n}\cap B_{m,n_1}=\emptyset$ for $n\neq n_1$,
    \item $(B_{m,n})'=\partial U$, that is, $\partial U$ is the Hausdorff limit of the sets $B_{m,n}$.
\end{itemize}
Now, we take $\Omega$, $(\Omega_n)_{n\geq0}$, $(x_n)_{n\geq 0}$, $(\Delta_{m,n})_{m\geq 1, 1\leq n\leq 2^m}$, $(\Delta_m)_{m\geq 1}$ and $(N_m)_{m\geq 0}$ as in the proof of Theorem \ref{thm:EO1}. Let $(C_k)_{k\geq 1}$ be the sequence of the left half-plane $$C_k:=\{z\in\C: \Re z<4k-2\}.$$ 
Clearly, $$\Delta_{k,n}\cup\Delta_k\subset C_{k+1}\setminus \overline{C_k}.$$
Let $h_{m,n}$ be a non-constant linear map such that 
$$h_{m,n}(B_{m,n})\Subset \Delta(b_{m,n},\frac{1}{2})\subset \Delta_{m,n}, 0\leq m, 1\leq n\leq 2^m-1$$ and $h_{m,2^m}$ be a non-constant linear map such that 
$$h_{m,2^m}(B_{m,2^m})\Subset\Delta(4(m+1),\frac{1}{2})\subset\Delta_{m+1}, m\geq 0.$$
Since $U$ is a nice domain, so is $U^{-\delta}$ for given $\delta>0$. By Lemma \ref{lem:nice}, there exists a non-constant holomorphic map $h:U^{-\delta}\to U^{-\delta}$ and an absorbing domain $W$ such that $\overline{h(U^{-\delta})}\subset U^{-\delta}$, $\overline{W}\subset U^{-\delta}$ and $h^n(\overline{W})=\overline{h^n(W)}\subset h^{n-1}(W)$ for all $n\geq 1$,  and $\text{dist}(h^n(W), h^{n-1}(\partial W))>d>0$ for all $n\geq 1$ and some constant $d$.

Repeat the same argument of Theorem \ref{thm:EO1}, we can also establish the following:
\begin{lemma}\label{lem:W}
There exists a sequence $(f_k)_{k\geq 1}$ of entire functions and a sequence $(x_n^j)_{n,j\geq }$ of points such that for all $k\geq 1$:
\begin{enumerate}
         \item $|f_{k+1}-f_k|_{\overline{C_k}}\leq 2^{-k}$,
         \item $f^{N_{m}+2n-1}_k(\Omega_k)\subset B_{m,n}$, for  $0\leq m\leq k-1$ and $1\leq n\leq 2^{m}$,
         \item $f^{N_m}_k(\Omega_k)\subset \Delta_m$, for $1\leq m\leq k$,
         \item $f_k^j|_{\Omega_k}$ and $f_k^j|_{B_{m,n}}$ are univalent for all $1\leq j\leq N_{k-1}$, $0\leq m\leq k-1$ and $1\leq n\leq 2^m$,
         \item $f_k^j(x_n)=x_n^j$ for all $0\leq j\leq N_{k-1}$ and $1\leq n\leq k$, (here $x_n^0=x_n$).
         \item $f_k^{N_{n}}(x_n)=1$ for all $1\leq n\leq k$,
         \item $f_k(1)=1$ and $f_k'(1)=\frac{1}{2}$,
         \item $\overline{f_k^n(W)}\subset f_{k}^{n-1}(W),$ for all $ n\geq 1$,  and $f_k(U^{-\delta})\subset U^{-\delta}$.
     \end{enumerate}
\end{lemma}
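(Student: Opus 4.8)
The plan is to run the very same inductive construction as in the proof of Lemma~\ref{lem:1} (in the variant already used for Theorem~\ref{thm:F}), but now carrying the additional family of requirements (8) through the induction. Two points require adjustment. First, since the exhausting sets are the half-planes $C_k=\{z:\Re z<4k-2\}$ and the target set $U^{-\delta}$ is unbounded, I would replace Runge's theorem (Theorem~A) by Arakelyan's theorem (Theorem~B) at every stage: at the base stage one approximates on the closed set
$$A_1:=\overline{U^{-\delta}}\cup\overline{\Omega}\cup\bigcup_{0\le m,\ 1\le n\le 2^m}\overline{B_{m,n}}\cup\{1\}\cup\{x_1\},$$
and at the $k$-th stage on
$$A_k:=\overline{C_k}\cup\overline{\Delta_k}\cup\bigcup_{1\le n\le 2^k-1}\overline{\Delta_{k,n}}\cup\{x_{k+1}^{N_k}\}.$$
Each of these is a closed Weierstrass set --- a closed half-plane (respectively a bounded regular set with connected complement) together with finitely many pairwise disjoint closed disks and finitely many points, none of which separates the plane or destroys local connectedness at $\infty$ --- so Theorem~B applies, with $L_k$ the set of prescribed points. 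On $\overline{C_k}$ (and, at the base stage, on $\overline{U^{-\delta}}$) the prescribed map is $f_k$ (respectively the holomorphic self-map $h$ of $U^{-\delta}$ furnished by Lemma~\ref{lem:nice}); on the $\Delta$- and $B$-pieces it is the same system of linear maps $g_{k,n}$, $h_{m,n}$ used in Lemma~\ref{lem:1}; on the points it is the normalising data $1$, $\tfrac12$ and the prescribed orbit values. With these choices conditions (1)--(7) are obtained exactly as in Lemma~\ref{lem:1}: the flowchart $\Omega_k\to B_{0,1}\to\Delta_1\to\cdots\to\Delta_k$, the attracting fixed point at $1$, the orbits of the $x_n$, and the univalence statements are all untouched by the extra data on $\overline{U^{-\delta}}$, because $\overline{U^{-\delta}}$ lies in the left half-plane and is disjoint from $\overline{\Omega}$, from every $B_{m,n}$, and from every $\Delta_m$ and $\Delta_{m,n}$.

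Second, for condition (8) I would use the trapping device already employed for the inclusion $f_k(D^{-\delta})\subset D^{-\delta}$ in the proof of Theorem~\ref{thm:F}, with the bounded trap $\Delta(0,\tfrac c2)$ there replaced by the absorbing domain $W$ of Lemma~\ref{lem:nice} and the constant $c$ replaced by the gap constant $d$. Concretely, I also require $f_1$ to be $\tfrac{\epsilon_1}{2}$-close to $h$ on a fixed closed neighborhood of $\overline{W}$ contained in $U^{-\delta}$ (this neighborhood is absorbed into $A_1$); since $\overline{U^{-\delta}}\subset\overline{C_1}\subset\overline{C_k}$ and $|f_{k+1}-f_k|_{\overline{C_k}}\le\epsilon_{k+1}$, this yields $|f_k-h|_{\overline{U^{-\delta}}}\le\sum_{j\le k}\epsilon_j$ for all $k$, and I choose the $\epsilon_j$ so that $\sum_j\epsilon_j<d$. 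Then the inclusions $\overline{h^n(W)}\subset h^{n-1}(W)$ and the uniform gap $\text{dist}(h^n(W),h^{n-1}(\partial W))>d$ of Lemma~\ref{lem:nice} transfer to $f_k$: arguing by induction on $n$ and applying Lemma~\ref{lem:MRW} along a compact exhaustion of $W$ to pass from closeness of $f_k$ to $h$ on $\overline{U^{-\delta}}$ to closeness of $f_k^n$ to $h^n$ on compact subsets of $W$, one gets $\overline{f_k^n(W)}\subset f_k^{n-1}(W)$ for every $n\ge1$; and since $h$ pushes $\overline{U^{-\delta}}$ a definite distance into $U^{-\delta}$ (again by Lemma~\ref{lem:nice}), the same error bound gives $f_k(U^{-\delta})\subset U^{-\delta}$. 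Finally one sets $x_{k+2}^j:=f_{k+1}^j(x_{k+2})$ as before, and the inductive step is complete.

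The step I expect to be the real obstacle is precisely the propagation of (8). Because $W$ and all its iterates $h^{n-1}(W)$ are unbounded, Lemma~\ref{lem:MRW} only controls $f_k^n$ versus $h^n$ on compact pieces, so the comparison has to be run along an exhaustion of $W$, and one must verify that the approximation error --- which accumulates both over the stages $k$ (and is bounded there by $\sum_j\epsilon_j$) and over the iterate $n$ --- is swallowed once and for all by the single geometric constant $d$, and in particular never erodes the $d$-gap between $h^n(W)$ and $h^{n-1}(\partial W)$ even where $h$ has large distortion near $\infty$; this is exactly what the last clause of Lemma~\ref{lem:nice} was designed to guarantee. Everything else is routine and repeats the proof of Lemma~\ref{lem:1} almost word for word: the estimates bounding $f_{k+1}$ near the centres of the $B_{k,j}$ and $\Delta_{k,j}$, the resulting inclusions along the flowchart, the univalence of $f_k^j$ on $\Omega_k$ and on the $B_{m,n}$, and the normalisation $f_k(1)=1$, $f_k'(1)=\tfrac12$, $f_k^{N_n}(x_n)=1$.
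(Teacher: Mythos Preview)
Your proposal is correct and follows essentially the same route as the paper: replace Runge's theorem by Arakelyan's theorem, add $\overline{U}$ (the paper) or $\overline{U^{-\delta}}$ (you) as one more piece of the closed Weierstrass set, approximate $h$ there, and otherwise repeat the proof of Lemma~\ref{lem:1} verbatim; for (8) the paper, like you, invokes the $d$-gap of Lemma~\ref{lem:nice} and Lemma~\ref{lem:MRW} to pass from $|f_k-h|$ small to $\overline{f_k^n(W)}\subset f_k^{n-1}(W)$.

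Two small points of comparison. First, the paper applies Lemma~\ref{lem:MRW} directly with $K=\overline{W}$; note that the lemma as stated in the paper allows any \emph{closed} $K\subset G$, not just compact ones, so the exhaustion argument you outline is unnecessary (though harmless). Second, your choice of $\overline{U^{-\delta}}$ rather than $\overline{U}$ for the extra piece creates a minor nuisance: the disks $B_{m,n}$ accumulate on $\partial U$, which lies in $\overline{U}$ but not in $\overline{U^{-\delta}}$, so your set $A_1$ as written is not closed; the paper avoids this by taking $K=\overline{U}$ (at the cost of being a bit casual about where $h$ is actually defined). Either way the fix is routine.
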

First, by Theorem B, it is not hard to construct an entire function $f_1$ satisfying:
   \begin{itemize}
   \item[(i).] $f_1^{N_0+1}(\Omega_1)=f_1(\Omega_1)\subset B_{0,1}$,
   \item[(ii).] 
   $f_1^{N_1}(\Omega_1)=f_1^2(\Omega_1)\subset\Delta_1$,
   \item[(iii).] $f_1^j|_{\Omega_1}$ and $f_1^j|_{B_{0,1}}$ are univalent for $1\leq j\leq N_0$,
   \item[(iv).] $f_1^{N_1}(x_1)=f_1(x_1)=1$,
   \item[(v).] $f_1(1)=1$ and $f_1'(1)=\frac{1}{2}$,
   \item [(vi).] $f_1^n(W)\subset f_1^{n-1}(W)$ and $f_1(U^{-\delta})\subset U^{-\delta}$.
   \end{itemize}
   Indeed, define closed sets $K_1$, $K_2$, $K_3$ and $K_{m,n}$ as in the proof of Lemma \ref{lem:1} and $K=\overline{U}$. It is clear that $$A:=\bigcup_{0\leq m, 1\leq n\leq 2^m, 1\leq j\leq 3}K_{m,n}\cup K_j\cup K$$ is a closed Weierstrass set. By Theorem B, for every $\epsilon>0$, there exists an entire function $f_1$ such that 
   \begin{itemize}
    \item $|f_1-h_j|_{K_j}\leq\frac{\epsilon_1}{2}$ for all $1\leq j\leq 3$,
    \item $|f_1-h_{m,n}|_{K_{m,n}}\leq\frac{\epsilon_1}{2}$ for all $0\leq m, 1\leq n\leq 2^m$,
    \item $|f_1-h|_K\leq \frac{\epsilon_1}{2}$,
    \item $f_1(x_1)=1$
    \item $f_1(1)=1$ and $f_1'(1)=\frac{1}{2}.$
\end{itemize}
Thus, (i)-(v) can be achieved as long as $\epsilon_1>0$ is small. We now check (vi) holds. For $z\in U^{-\delta}$, since $\overline{h(U)}\subset U^{-\delta}$, it follows that $\text{dist}(h(z),\C\setminus U^{-\delta})>\beta$ for some $\beta>0$, and then if $\epsilon_1$ is chosen to be smaller than $\beta$, we have 
\begin{align*}
    \text{dist}(f_1(z), \C\setminus U^{-\delta})&\geq\text{dist}(h(z),\C\setminus U^{-\delta})-|f_1(z)-h(z)|\\
    &\geq \beta-\epsilon_1>0.
\end{align*} 
Hence $f_1(U^{-\delta})\subset U^{-\delta}$. Similarly, we have $f_1(W)\subset W$.

Now, given a point $w\in W$, 
by Lemma \ref{lem:MRW}, for every $n\geq 1$, we have 
$$|f_1^j-h^j|_{\overline{W}}<\epsilon, 1\leq j\leq n$$
thus, we can choose $$\epsilon<\frac{d}{2}<\frac{1}{2}\text{dist}(h^{n}(W), h^{n-1}(\partial W)),$$
this implies that 
\begin{align*}
    \text{dist}(f_1^n(w), f_1^{n-1}(\partial W))&\geq \text{dist}(h^n(w), h^{n-1}(\partial W))-|f_1^{n-1}(w)-h^{n-1}(w)|\\
    &\quad\quad-\sup_{w'\in\partial W}|f_1^{n-1}(w')-h^{n-1}(w')|\\
    &\geq \text{dist}(h^n(w), h^{n-1}(\partial W))-2|f_1^{n-1}-h^{n-1}|_{\overline{W}}\\
    &>0.
\end{align*} Therefore, $\overline{f_1^n(W)}\subset f_1^{n-1}(W)$ for all $n\geq 1$.

Suppose that entire functions $f_1,\dots,f_k$ and points $(x_n^j)^{0\leq j\leq N_{n-1}}_{1\leq n\leq k+1}$ are constructed. Repeat the same proof of Lemma \ref{lem:1}, we can inductively obtain an entire function $f_{k+1}$ satisfying (1)-(7) in Lemma \ref{lem:W} as long as $$\sum_{j=1}^{\infty}\epsilon_j<\delta.$$ 
Now, we are going to check (8) holds. 
Indeed, one can get for $z\in U^{-\delta}$
$$\text{dist}(f_{k+1}(z), \C\setminus U^{-\delta})\geq \delta-\sum_{j=1}^{k+1}\epsilon_j>0.$$ 
Hence $f_{k+1}(U^{-\delta})\subset U^{-\delta}$ and also $f_{k+1}(W)\subset W$. Analog to the case of $f_1$, we can obtain that 
$$\overline{f_{k+1}^n(W)}\subset f_{k+1}^{n-1}(W)$$ for all $n\geq 1$.
This completes the proof of Lemma \ref{lem:W}.

Therefore, we can construct an entire function $f$ such that $f(U)\subset U$, which implies that $U$ is an invariant Fatou set. Observe that for every $n\geq 1$, our function $f$ satisfies $$|f^n-h^n|_{\overline{W}}<\sum_{k=0}^n\sum_{j=k+1}^{\infty}\epsilon_j<\sum_{k=1}^{\infty}k\epsilon_k.$$  Since $h^n|_W\to \infty$ as $n\to\infty$, it follows that $$f^n|_W\to\infty,\ n\to\infty$$ as long as $\sum_{k=1}^{\infty}k\epsilon_k<\infty$. 
Thus $U^{-\delta}$ is in an invariant Baker domain $V$. As $\partial U$ is a Julia component, it follows that $U^{-\delta}\subset V\subset U.$
 \section*{Acknowledgement}
 We would like to thank Yang Fei for pointing out that not all regular domains can be realized as a periodic Fatou component in the Workshop on Complex Dynamics and Complex Equations at TSIMF.
	\bibliographystyle{abbrv}
	\bibliography{con}
\end{document}